\newtheorem{theorem}{Theorem}[section]
\newtheorem{corollary}[theorem] {Corollary}
\newtheorem{definition}[theorem]{Definition}
\newtheorem{lemma} [theorem]{Lemma}
\newtheorem{proposition}[theorem]{Proposition}
\newtheorem{remark}[theorem]{Remark}
\title{\bf A Study on the Nourishing Number of Graphs and Graph Powers}
\author{{\bf N K Sudev \footnote{Department of Mathematics, Vidya Academy of Science \& Technology, Thalakkottukara, Thrissur - 680501, email: {\em sudevnk@gmail.com}}} and {\bf K A Germina\footnote{Department of Mathematics, School of Mathematical \& Physical Sciences, Central University of Kerala, Kasaragod, email:{\em srgerminaka@gmail.com}}}}
\date{}
\begin{document}
\maketitle

\begin{abstract}
An integer additive set-indexer is defined as an injective function $f:V(G)\rightarrow 2^{\mathbb{N}_0}$ such that the induced function $g_f:E(G) \rightarrow 2^{\mathbb{N}_0}$ defined by $g_f (uv) = f(u)+ f(v)$ is also injective, where $f(u)+f(v)$ is the sumset of $f(u)$ and $f(v)$. If $g_f(uv)=k~\forall~uv\in E(G)$, then $f$ is said to be a $k$-uniform integer additive set-indexer. An integer additive set-indexer $f$ is said to be a strong integer additive set-indexer if $|g_f(uv)|=|f(u)|.|f(v)|~\forall ~ uv\in E(G)$.  In this paper, we study the characteristics of certain graph classes and graph powers that admit strong integer additive set-indexers.
\end{abstract}
{\bf Keywords:} Graph powers, integer additive set-indexers, strong integer additive set-indexers, nourishing number of a graph.

\noindent \textbf{Subject Classification 2010: 05C78}

\section{Preliminaries}

For all  terms and definitions, not defined specifically in this paper, we refer to \cite{FH}. For more about different graph classes, we further refer to \cite{BLS}, \cite{JAG} and \cite{GCO}. Unless mentioned otherwise, all graphs considered here are simple, finite and have no isolated vertices.

Let $\mathbb{N}_0$ denote the set of all non-negative integers. For all $A, B \subseteq \mathbb{N}_0$, the sum of these sets is denoted by  $A+B$ and is defined by $A + B = \{a+b: a \in A, b \in B\}$. The set $A+B$ is called the {\em sumset} of the sets $A$ and $B$. 

If either $A$ or $B$ is countably infinite, then their sumset is also countably infinite. Hence, the sets we consider here are all finite sets of non-negative integers. The cardinality of a set $A$ is denoted by $|A|$. 

Based on the concepts of sumsets of finite non-empty sets, an integer additive set-indexer of a given graph $G$ is defined as follows.

\begin{definition}\label{D2}{\rm
\cite{GA} An {\em integer additive set-indexer} (IASI, in short) is defined as an injective function $f:V(G)\rightarrow 2^{\mathbb{N}_0}$ such that the induced function $g_f:E(G) \rightarrow 2^{\mathbb{N}_0}$ defined by $g_f (uv) = f(u)+ f(v)$ is also injective}.
\end{definition}

\begin{definition}{\rm
\cite{GS2} If a graph $G$ has a set-indexer $f$ such that $|g_f(uv)|=|f(u)+f(v)|=|f(u)|.|f(v)|$ for all vertices $u$ and $v$ of $G$, then $f$ is said to be a {\em strong IASI} of $G$.} 
\end{definition}

We use the notation $A<B$ in the sense that $A\cap B=\emptyset$. We notice that the relation $<$ is symmetric, but not reflexive and need not be transitive. By the sequence $A_1<A_2<A_3<\ldots <A_n$, we mean that the given sets are pairwise disjoint. The {\em difference set} of a set $A$, denoted by $D_A$ is the set of all differences between any two elements of $A$.

\begin{lemma}\label{L-RDS}
\cite{GS2} Let $A$, $B$ be two non-empty subsets of $\mathbb{N}_0$. Then, $|A+B|=|A|.|B|$ if and only if their difference sets, denoted by $D_A$ and $D_B$ respectively, follow the relation $D_A<D_B$ (That is, $D_A$ and $D_B$ are pairwise disjoint).
\end{lemma}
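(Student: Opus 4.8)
The plan is to translate the cardinality equation into an injectivity statement and then recast that injectivity in terms of differences. Observe first that $A+B$ is the image of the addition map $\sigma\colon A\times B\to\mathbb{N}_0$, $\sigma(a,b)=a+b$; hence $|A+B|\le |A|\cdot|B|$ always, with equality if and only if $\sigma$ is injective. So the lemma reduces to showing that $\sigma$ is injective precisely when $D_A$ and $D_B$ share no nonzero element, i.e. $D_A<D_B$ (under the convention that a difference set records differences of \emph{distinct} elements, so that $D_A<D_B$ is not automatically violated by $0$).

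For the ``only if'' direction I would argue the contrapositive. Starting from a collision $\sigma(a_1,b_1)=\sigma(a_2,b_2)$ with $(a_1,b_1)\neq(a_2,b_2)$, the key step is a short case analysis: if $a_1=a_2$, then $a_1+b_1=a_2+b_2$ forces $b_1=b_2$, contradicting $(a_1,b_1)\neq(a_2,b_2)$; hence $a_1\neq a_2$, and symmetrically $b_1\neq b_2$. Rearranging the collision gives $a_1-a_2=b_2-b_1$, which is a common element of $D_A$ and $D_B$, and it is nonzero because $a_1\neq a_2$. This exhibits the required nonempty intersection $D_A\cap D_B$.

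For the ``if'' direction I would reverse the computation: suppose $d$ is a nonzero element of $D_A\cap D_B$, and write $d=a_1-a_2$ with $a_1,a_2\in A$ and $d=b_2-b_1$ with $b_1,b_2\in B$. Since $d\neq 0$ we have $a_1\neq a_2$ and $b_1\neq b_2$, so adding the two expressions yields $a_1+b_1=a_2+b_2$ with $(a_1,b_1)\neq(a_2,b_2)$; thus $\sigma$ fails to be injective and $|A+B|<|A|\cdot|B|$. Taking contrapositives on both sides gives the stated equivalence.

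I do not expect a genuine obstacle here; the proof is essentially the single observation that an ``off-diagonal'' collision in $\sigma$ is the same thing as a shared nonzero difference. The only point needing care is the bookkeeping around $0$: every nonempty set has $0$ in its difference set, so $D_A<D_B$ must be read as disjointness of the \emph{nonzero} differences, and the case analysis above is precisely what guarantees that the differences arising from a collision are nonzero. I would state that convention explicitly at the outset so the two implications line up cleanly.
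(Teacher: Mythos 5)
Your proof is correct, and it is the natural (essentially the only) argument: equality $|A+B|=|A|\cdot|B|$ is exactly injectivity of the addition map on $A\times B$, and an off-diagonal collision $a_1+b_1=a_2+b_2$ is the same datum as a common nonzero element $a_1-a_2=b_2-b_1$ of the two difference sets. Note that the paper does not prove this lemma itself --- it is quoted from the reference [GS2] --- so there is no in-paper argument to compare against; your explicit remark that $D_A$ must be read as the set of differences of \emph{distinct} elements (else $0$ lies in every difference set and $D_A<D_B$ could never hold) is a worthwhile clarification that the paper's statement glosses over.
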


\begin{theorem}\label{TSK1}
\cite{GS2} Let each vertex $v_i$ of the  complete graph $K_n$ be labeled by the set $A_i\in 2^{\mathbb{N}_0}$. Then $K_n$ admits a strong IASI if and only if there exists a finite sequence of sets $D_1<D_2<D_3<\ldots,<D_n$ where each $D_i$ is the set of all differences between any two elements of the set $A_i$.
\end{theorem}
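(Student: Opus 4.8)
The plan is to prove both implications by a pair-by-pair reduction to Lemma \ref{L-RDS}, exploiting the one structural fact about $K_n$ that matters here: every pair of distinct vertices is joined by an edge. Writing $f(v_i) = A_i$, I would begin from the definition, noting that $f$ is a strong IASI of $K_n$ if and only if $|g_f(v_iv_j)| = |A_i + A_j| = |A_i|\cdot|A_j|$ for every edge $v_iv_j$, and in $K_n$ this simply means: for every pair $\{i,j\}$ with $i \ne j$.

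For the forward direction I would fix $i \ne j$, observe $v_iv_j \in E(K_n)$, and apply Lemma \ref{L-RDS} to the equality $|A_i + A_j| = |A_i|\cdot|A_j|$ to conclude $D_{A_i} \cap D_{A_j} = \emptyset$, i.e. $D_i < D_j$; letting $\{i,j\}$ range over all pairs then yields pairwise disjointness of $D_1, \ldots, D_n$, which is exactly $D_1 < D_2 < \cdots < D_n$. For the converse, from $D_1 < D_2 < \cdots < D_n$ I would take any edge $v_iv_j$, use $D_i \cap D_j = \emptyset$ together with the reverse implication of Lemma \ref{L-RDS} to get $|A_i + A_j| = |A_i|\cdot|A_j|$, so the strong-product condition holds on all of $E(K_n)$.

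The remaining task is to check that $f : v_i \mapsto A_i$ is genuinely an IASI, i.e. that $f$ and the induced map $g_f$ are injective. Injectivity of $f$ is clear once the labels are taken pairwise distinct (equal labels would force equal, hence non-disjoint, non-empty difference sets), and injectivity of $g_f$ on the complete graph can be extracted from the pairwise disjointness of the $D_i$ — which forces a unique representation $a_i + a_j$ for each element of $A_i + A_j$ — or else circumvented by choosing the realizing labels to be mutually far apart. I expect this injectivity bookkeeping, rather than the main equivalence, to be the only place where a little care is needed; the core of the argument is just Lemma \ref{L-RDS} applied once per edge, which is available precisely because $K_n$ has no non-adjacent pair of vertices.
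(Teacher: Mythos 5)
The paper does not actually prove Theorem~\ref{TSK1}: it is quoted from \cite{GS2} as a known result, so there is no in-paper proof to compare against. Your argument is the natural (and surely the intended) one: since every pair of vertices of $K_n$ is an edge, the strong-IASI condition $|A_i+A_j|=|A_i|\cdot|A_j|$ must hold for every pair, and Lemma~\ref{L-RDS} converts each such equality into $D_i\cap D_j=\emptyset$ and back; pairwise disjointness is exactly the chain $D_1<D_2<\cdots<D_n$ in the paper's notation. That part is correct and complete. The one place you should be more careful is the injectivity bookkeeping in the converse direction: your claim that pairwise disjointness of the $D_i$ ``forces a unique representation $a_i+a_j$ for each element of $A_i+A_j$'' only gives injectivity of the addition map \emph{within a single sumset} (i.e.\ $|A_i+A_j|=|A_i||A_j|$); it does not give injectivity of $g_f$ \emph{across distinct edges}. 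For instance, singleton labels $A_i=\{i-1\}$ have empty (hence pairwise disjoint) difference sets, yet $A_1+A_4=A_2+A_3$, so $g_f$ fails to be injective on $K_4$. Thus the existence of a disjoint difference chain does not by itself make the given labeling an IASI; one genuinely needs your fallback of re-choosing (or spacing out) the realizing labels, or else must read the theorem as implicitly assuming $f$ and $g_f$ are already injective and only characterizing the ``strong'' property. With that caveat made explicit, the proof is sound and is the standard route through Lemma~\ref{L-RDS}.
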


\begin{theorem}\label{TSK2}
\cite{GS2} A connected graph $G$ (on $n$ vertices) admits a strong IASI if and only if each vertex $v_i$ of $G$ is labeled by a set $A_i$ in $2^{\mathbb{N}_0}$ and there exists a finite sequence of sets $D_1<D_2<D_3< \ldots <D_m$, where $m\le n$ is a positive integer and each $D_i$ is the set of all differences between any two elements of the set $A_i$.
\end{theorem}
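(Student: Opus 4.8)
The plan is to strip the statement down to a single edge and let Lemma \ref{L-RDS} do the work there, and then separately handle the two injectivity requirements hidden inside the definition of a strong IASI. Throughout, I write $D_A$ for the difference set of a set $A$.

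For the ``if'' direction, suppose the vertices of $G$ carry sets $A_1,\dots,A_n$ whose difference sets, after deleting repetitions, form a chain $D_1<D_2<\dots<D_m$ with $m\le n$, adjacent vertices always receiving different members of this chain. Fix any edge $v_iv_j$. Then $D_{A_i}$ and $D_{A_j}$ are two \emph{distinct} members of a pairwise disjoint family, so $D_{A_i}\cap D_{A_j}=\emptyset$, and Lemma \ref{L-RDS} yields $|A_i+A_j|=|A_i|\cdot|A_j|$; since the edge was arbitrary, the uniform-product condition holds on every edge. What remains is to make $f$ and $g_f$ injective. I would do this by a translation argument: replacing a label $A_i$ by a translate $A_i+t$ changes neither $D_{A_i}$ nor any of the disjointness relations above, so within each class of vertices sharing a common difference set one may shift the labels apart to make $f$ injective, and a sufficiently generic further choice of these shifts separates all the sumsets $A_i+A_j$ over edges, making $g_f$ injective. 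This produces a strong IASI.

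For the ``only if'' direction one must exhibit, for a connected graph that already admits a strong IASI, a labelling of exactly the above shape. Starting from a strong IASI $f$, Lemma \ref{L-RDS} immediately gives $D_{f(u)}\cap D_{f(v)}=\emptyset$ for every edge $uv$; however, the distinct difference sets of that particular $f$ need \emph{not} be pairwise disjoint (a three-vertex example shows that two non-adjacent vertices can carry unequal but overlapping difference sets), so the chain $D_1<\dots<D_m$ has to be built rather than merely read off. The natural route is to use the local disjointness as a proper colouring: let the colour classes be any proper colouring of $G$ with $\chi(G)\le n$ colours, attach to the $k$-th class a single difference set $D_k$ with $D_1<D_2<\dots<D_{\chi(G)}$, realise each $D_k$ by concrete labels (translates of one base set) on the vertices of that class, and once more spread the translates to recover injectivity of $f$ and of $g_f$. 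Taking $m:=\chi(G)$ gives $m\le n$ and a labelling of the required form, which also explains why $m$ can be strictly smaller than $n$, in contrast with $K_n$.

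The step I expect to be the real obstacle is precisely this last construction: converting the purely edge-local disjointness of adjacent difference sets into one global pairwise disjoint chain of length at most $n$, while simultaneously keeping both $f$ and $g_f$ injective. Recognising the difference-set classes as the classes of a proper colouring makes the existence of the chain transparent, but the translation bookkeeping needed to keep $g_f$ injective after all the shifts is where the care is required. Everything else is a one-line appeal to Lemma \ref{L-RDS}, edge by edge, and it is also the point at which the parameter $m$ — hence, later in the paper, the nourishing number — enters the picture.
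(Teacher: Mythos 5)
The paper does not actually prove Theorem~\ref{TSK2}: it is imported verbatim from the ``communicated'' reference [GS2], so there is no in-paper proof to measure your argument against. Taken on its own terms, your reconstruction is reasonable. The edge-local core is exactly right: Lemma~\ref{L-RDS} reduces the strong condition on each edge to disjointness of the two incident difference sets, difference sets are translation-invariant, and a generic choice of translates restores injectivity of $f$ and of $g_f$ while preserving all cardinalities --- that part is a standard finitely-many-bad-conditions argument and is fine as a sketch. You are also right, and it is worth saying explicitly, that the ``if'' direction is false without your added hypothesis that adjacent vertices receive \emph{distinct} members of the chain; the theorem as printed omits this.

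The one substantive concern is your identification of the chain length $m$ with $\chi(G)$ via a proper colouring. This does prove the literal statement (any $m\le n$ suffices, and $\chi(G)\le n$), and it does show every connected graph admits a strong IASI of the required shape. But it is not the chain the paper has in mind: the subsequent development (Theorem~\ref{T-NNKn}, the Remark following it, and Theorem~\ref{T-NNPC}) defines the nourishing number as the minimum length of the \emph{maximal} chain and identifies it with the order of a maximal clique, i.e.\ $\omega(G)$, not $\chi(G)$. For $C_5$ the paper's chain has length $2$ while your construction forces three pairwise disjoint difference sets. The resolution is that the paper's chain is only required to run over the vertices of a maximal clique (where pairwise disjointness is \emph{forced} by Theorem~\ref{TSS} and Theorem~\ref{T-NNKn}), whereas you impose global pairwise disjointness of all distinct difference sets, which is strictly stronger than what a strong IASI demands of non-adjacent vertices. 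So your ``only if'' direction proves more than is needed and, as a by-product, attaches the wrong combinatorial invariant to $m$; your closing remark that this is ``where the nourishing number enters'' should be corrected to the clique number.
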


The set $D_i$ of all differences between two elements of the set $A_i$ is called the {\em difference set} of $A_i$ and the relation $<$ is called the {\em difference relation} on $G$. The finite sequence $D_1<D_2<D_3< \ldots <D_m$ of difference sets of the set-labels of vertices of a given graph $G$ is called {\em difference chain} of $G$.

\begin{theorem}\label{TSS}
\cite{GS2} If a graph $G$ admits a strong IASI, then any subgraph $G_1$ of $G$ also admits strong IASI, which is an IASI induced on  $G_1$  by $G$.
\end{theorem}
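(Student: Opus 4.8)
The plan is to prove this by the obvious restriction argument: a strong IASI of $G$, when restricted to the vertex set of $G_1$, should itself be a strong IASI of $G_1$. So let $f\colon V(G)\to 2^{\mathbb{N}_0}$ be a strong IASI of $G$, and since $G_1$ is a subgraph of $G$ we have $V(G_1)\subseteq V(G)$ and $E(G_1)\subseteq E(G)$. Define $f_1 := f|_{V(G_1)}$ and verify that it is a strong IASI of $G_1$.

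The verification proceeds in three short steps. First, $f_1$ is injective, since the restriction of an injective function to a subset of its domain is injective. Second, the induced edge map $g_{f_1}\colon E(G_1)\to 2^{\mathbb{N}_0}$ satisfies $g_{f_1}(uv)=f_1(u)+f_1(v)=f(u)+f(v)=g_f(uv)$ for every $uv\in E(G_1)$, so $g_{f_1}$ is merely the restriction of $g_f$ to $E(G_1)$; as $g_f$ is injective on $E(G)$ it remains injective on the subset $E(G_1)$, whence $f_1$ is an IASI of $G_1$. Third, for every edge $uv\in E(G_1)\subseteq E(G)$ the strongness of $f$ gives $|g_{f_1}(uv)|=|g_f(uv)|=|f(u)|\cdot|f(v)|=|f_1(u)|\cdot|f_1(v)|$, so $f_1$ is strong. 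Equivalently, one may phrase this through Lemma~\ref{L-RDS} and Theorem~\ref{TSK2}: the difference sets attached to the vertices of $G_1$ form a sub-sequence of a difference chain of $G$, and a sub-sequence of a chain of pairwise disjoint sets is again such a chain, so $G_1$ admits a strong IASI.

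I do not expect a genuine obstacle here, as admitting a strong IASI is a hereditary property and the argument is essentially bookkeeping about restrictions. The one point deserving a word of care is the standing convention that graphs have no isolated vertices: passing to a subgraph $G_1$ may create isolated vertices, but this does not affect anything — injectivity of $f_1$ is unchanged and the edge condition at an isolated vertex is vacuous — so one either restricts attention to subgraphs without isolated vertices or simply observes that the induced labeling still meets the defining conditions of a (strong) IASI on $G_1$.
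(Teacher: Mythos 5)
Your restriction argument is correct and complete: the vertex labeling restricted to $V(G_1)$ stays injective, the induced edge map is just the restriction of $g_f$ to $E(G_1)\subseteq E(G)$ and so stays injective, and the cardinality condition $|g_f(uv)|=|f(u)|\cdot|f(v)|$ is inherited edgewise. The paper itself states this theorem as a quoted result from \cite{GS2} and gives no proof, so there is nothing to compare against; your argument is the standard one and the remark about isolated vertices created by passing to a subgraph is a sensible point of care.
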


\begin{definition}{\rm
\cite{GS9} The {\em nourishing number} of a set-labeled graph is the minimum length of the maximal chain of difference sets in $G$. The nourishing number of a graph $G$ is denoted by $\varkappa(G)$.}
\end{definition}

\begin{theorem}\label{T-NNKn}
\cite{GS9} The difference sets of the set-labels of all vertices of a complete graph $K_n$ are pairwise disjoint. That is, the nourishing number of a complete graph $K_n$ is $n$.
\end{theorem}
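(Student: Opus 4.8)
The plan is to read off both halves of the statement from the characterisation of strong IASIs of complete graphs in Theorem~\ref{TSK1}, together with the definition of the nourishing number. As a preliminary I would note that $K_n$ does admit a strong IASI, so that $\varkappa(K_n)$ is defined: by Theorem~\ref{TSK1} it is enough to produce a difference chain of length $n$, which one gets for instance from the labelling $f(v_i)=A_i=\{0,2^i\}$, whose difference sets are mutually disjoint.

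For the first assertion I would fix an arbitrary strong IASI $f$ of $K_n$, set $A_i=f(v_i)$, and let $D_i$ be the difference set of $A_i$. Applying Theorem~\ref{TSK1} to this labelling gives, after a suitable reordering of the vertices, $D_1<D_2<\cdots<D_n$; that is, the $n$ difference sets $D_1,\ldots,D_n$ are pairwise disjoint, which is precisely what is claimed.

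For the value $\varkappa(K_n)=n$, the point is that these $n$ pairwise disjoint difference sets already constitute a difference chain, of length $n$, and no further difference set can be adjoined to it because $K_n$ has only $n$ vertices; hence it is a \emph{maximal} difference chain. Thus, under \emph{every} strong IASI of $K_n$, the maximal difference chain has length exactly $n$: it cannot exceed $n$ (at most $n$ vertices, hence at most $n$ difference sets), and it cannot fall below $n$ (a maximal chain must contain all $n$ mutually compatible difference sets). Taking the minimum of these lengths over all strong IASIs of $K_n$ --- which is the nourishing number --- therefore yields $\varkappa(K_n)=n$.

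I do not expect a genuine obstacle here; the argument is essentially bookkeeping on top of Theorem~\ref{TSK1}. The one step to state with care is the two-sided bound on the length of a maximal difference chain of $K_n$ (at most $n$ by vertex count, at least $n$ because all difference sets are pairwise disjoint by Theorem~\ref{TSK1}), since it is this that makes the chain length independent of the chosen labelling and collapses the minimum in the definition of $\varkappa$ to $n$.
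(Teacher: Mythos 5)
Your proposal is correct, and there is nothing in this paper to compare it against: Theorem~\ref{T-NNKn} is imported from \cite{GS9} and stated here without proof. Your derivation --- the ``only if'' direction of Theorem~\ref{TSK1} forces the $n$ difference sets under any strong IASI of $K_n$ to be pairwise disjoint, an explicit labelling such as $A_i=\{0,2^i\}$ shows such an IASI exists, and the maximal difference chain therefore has length exactly $n$ for every admissible labelling, collapsing the minimum in the definition of $\varkappa$ --- is the natural argument and is surely the one intended in the cited source.
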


\begin{remark}
Let $G_1$ be a clique in a strong IASI graph $G$. Then by Theorem \ref{TSS}, $G_1$ also admits the induced strong IASI. Then, the set-labels of all vertices of $G_1$ have pairwise disjoint difference sets. Therefore, the nourishing number of a graph $G$ is the order of a maximal clique in it.
\end{remark}

\begin{theorem}\label{T-NNPC}
\cite{GS9}, The nourishing number of triangle-free graphs is $2$. 
\end{theorem}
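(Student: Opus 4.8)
The plan is to show $\varkappa(G)\ge 2$ and $\varkappa(G)\le 2$ separately, using for the lower bound only Lemma~\ref{L-RDS} and for the upper bound the identification of the nourishing number with the order of a largest clique recorded in the Remark following Theorem~\ref{T-NNKn}.

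For the lower bound, I would note that $G$ has no isolated vertices, so it contains an edge $uv$. In any strong IASI $f$ of $G$ we have $|g_f(uv)|=|f(u)|\cdot|f(v)|$, hence by Lemma~\ref{L-RDS} the difference sets satisfy $D_{f(u)}<D_{f(v)}$; this is a difference chain of length $2$, so every maximal difference chain of $G$ has length at least $2$ and $\varkappa(G)\ge 2$. For the upper bound I would invoke the Remark after Theorem~\ref{T-NNKn}: the nourishing number of a strong IASI graph equals the order of a maximal clique in it. Since $G$ is triangle-free it contains no $K_3$, while, having an edge, it does contain $K_2$; therefore a maximal complete subgraph of $G$ has exactly two vertices and $\varkappa(G)=2$. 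Combining the two estimates finishes the proof.

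The genuinely substantive content is the inequality $\varkappa(G)\le 2$ that sits inside the Remark, i.e.\ the assertion that a triangle-free $G$ really does admit a strong IASI all of whose difference chains have length at most $2$. This amounts to set-labeling the vertices so that adjacent vertices receive disjoint difference sets while no three difference sets are pairwise disjoint; triangle-freeness helps because every triple of vertices then contains a non-adjacent pair, so it suffices to arrange that non-adjacent vertices always share a common difference. The main obstacle is that realizable difference sets are severely constrained --- a two-element difference set must have the shape $\{d,2d\}$, a three-element one the shape $\{x,y,x+y\}$, and so on --- and $G$ need not be bipartite, so one cannot simply split the vertices into two ``types''; instead one must allocate disjoint blocks of integers to the vertices, choose for each vertex $v$ a realizable difference set (for instance of the form $\{x_v,\,y_v,\,x_v+y_v\}$ arising from the label $A_v=\{0,x_v,x_v+y_v\}$), and then verify simultaneously the prescribed disjointness/overlap pattern, the injectivity of $f$, and the injectivity of $g_f$. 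Once the Remark is taken as given, however, the theorem follows at once from the two estimates above.
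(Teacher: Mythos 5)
Your argument is correct and is essentially the route the paper takes: the paper offers no proof of this theorem at all (it is quoted from \cite{GS9}), but its surrounding text — in particular the Remark following Theorem~\ref{T-NNKn} identifying the nourishing number with the order of a maximal clique, and Remark~\ref{R-NNBG} deducing the bipartite/tree/cycle cases — shows that the intended derivation is exactly your two-line clique-number argument. You are also right to flag that the only substantive content is the upper bound hidden in that Remark, namely the existence of a strong IASI whose maximal difference chain has length equal to the clique number; neither the paper nor your proposal supplies that construction, and it is deferred to the cited reference.
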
 

\begin{remark}\label{R-NNBG}
In view of Theorem \ref{T-NNPC}, the nourishing number of trees, cycles and bipartite graphs is $2$.
\end{remark}

A characterisation of strong IASI graphs was done in \cite{GS2} and \cite{GS9}. Based on the results given above, in this paper, we discuss about the nourishing number of certain graph classes and graph powers.


\section{ New Results }

We consider the $r$-th power of a graph only if $r$ is a positive integer. Now, recall the definition of graph powers.

\begin{definition}{\rm 
\cite{BM1} The $r$-th power of a simple graph $G$ is the graph $G^r$ whose vertex set is $V$, two distinct vertices being adjacent in $G^r$ if and only if their distance in $G$ is at most $r$. The graph $ G^2 $ is referred to as the {\em square} of $ G $, the graph $ G^3 $ as the {\em cube} of G.}
\end{definition}

\noindent The following is an important theorem on graph powers.

\begin{theorem}\label{T-Gdiam}
\cite{EWW} If $d$ is the diameter of a graph $G$, then $G^d$ is a complete graph.
\end{theorem}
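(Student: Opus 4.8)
The plan is to argue directly by unwinding the two definitions in play, namely that of the diameter of a graph and that of the $r$-th power $G^r$. Recall that the diameter $d$ of a connected graph $G$ is $d=\max\{d_G(u,v):u,v\in V(G)\}$, where $d_G(u,v)$ denotes the length of a shortest $u$--$v$ path in $G$. Consequently, for \emph{every} pair of distinct vertices $u,v\in V(G)$ we have the bound $d_G(u,v)\le d$, with equality attained by at least one such pair.

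First I would invoke the definition of $G^d$: two distinct vertices $u$ and $v$ are adjacent in $G^d$ if and only if $d_G(u,v)\le d$. Combining this criterion with the bound from the previous paragraph, every pair of distinct vertices of $G$ is adjacent in $G^d$. Since $V(G^d)=V(G)$, the graph $G^d$ therefore contains every possible edge on its vertex set, and hence $G^d$ is the complete graph $K_n$ with $n=|V(G)|$.

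Honestly, there is no real obstacle here; the statement is an immediate consequence of the definitions, so the ``hard part'' is merely bookkeeping. The one point worth flagging is the standing hypothesis that $G$ be connected, which is exactly what makes the diameter a well-defined finite integer $d$; for a disconnected graph the diameter is infinite and the assertion must be read componentwise (or is vacuous). As a complementary remark, one could observe that $d$ is the \emph{least} exponent with this property, since any pair of vertices realising the diameter remains non-adjacent in $G^{d-1}$, but this sharper statement is not needed for the theorem as stated.
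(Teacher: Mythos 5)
Your proof is correct and is exactly the standard argument: the paper itself gives no proof (the result is simply cited from the encyclopedia reference), and the statement follows immediately from the definitions of diameter and graph power just as you describe. Your remark about connectedness being needed for $d$ to be finite is a worthwhile clarification that the paper leaves implicit.
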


First note that any power of a complete graph $K_n$ is $K_n$ itself and hence by Theorem \ref{T-NNKn}, the nourishing number of any power of $K_n$ is $n$. Hence, we begin with the study about the nourishing number of a finite powers of complete bipartite graphs.

\begin{proposition}
The nourishing number of the $r$-th power of a complete bipartite graph is 
 \begin{equation*}
 \varkappa(K_{m,n}^r)=
 	\begin{cases}
 	2 & \text{if}~~ r=1\\
 	m+n & \text{if}~~ r\ge 2.
 	\end{cases}
 \end{equation*}
\end{proposition}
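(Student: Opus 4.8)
The plan is to treat the two cases of the statement separately, reducing each of them to a result already established in the excerpt; the proof is essentially a reduction and requires no new labelling constructions.

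For $r=1$, I would simply observe that $K_{m,n}$ is bipartite and hence triangle-free, so Theorem~\ref{T-NNPC} (equivalently Remark~\ref{R-NNBG}) gives $\varkappa(K_{m,n})=2$ at once. For $r\ge 2$, the crucial point is that the diameter of $K_{m,n}$ is at most $2$: two vertices in different partite sets are adjacent, and two vertices in the same partite set share a neighbour in the other part, hence are joined by a path of length $2$. By Theorem~\ref{T-Gdiam}, it then follows that $K_{m,n}^{\,2}$ is the complete graph on the $m+n$ vertices of $K_{m,n}$. Since $E(K_{m,n}^{\,2})\subseteq E(K_{m,n}^{\,r})$ for every $r\ge 2$ while the vertex set is unchanged, we get $K_{m,n}^{\,r}=K_{m+n}$ for all $r\ge 2$, and Theorem~\ref{T-NNKn} yields $\varkappa(K_{m,n}^{\,r})=\varkappa(K_{m+n})=m+n$.

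The only matter calling for a little care is the degenerate range $m=1$ or $n=1$. If $\{m,n\}=\{1,1\}$ then $K_{1,1}=K_2$ has diameter $1$, yet $K_2^{\,r}=K_2=K_{1+1}$ for every $r$, so the stated formula still holds; and if exactly one of $m,n$ equals $1$, then $K_{m,n}$ is a star with at least two leaves, whose diameter is again $2$, so the argument above applies unchanged. Beyond bookkeeping these small instances I anticipate no genuine obstacle, since the whole argument rests on Theorems~\ref{T-NNKn}, \ref{T-NNPC} and \ref{T-Gdiam} together with the elementary distance computation in $K_{m,n}$.
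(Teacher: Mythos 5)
Your proposal is correct and follows essentially the same route as the paper: the $r=1$ case via triangle-freeness (Remark~\ref{R-NNBG}), and the $r\ge 2$ case by noting $\operatorname{diam}(K_{m,n})\le 2$ so that Theorem~\ref{T-Gdiam} makes $K_{m,n}^r$ the complete graph $K_{m+n}$, whence Theorem~\ref{T-NNKn} applies. Your explicit justification of the diameter bound and your handling of the degenerate cases $m=1$ or $n=1$ are welcome additions that the paper's terser proof omits.
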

\begin{proof}
By Remark \ref{R-NNBG}, the nourishing number of a complete bipartite graph $G$ is $2$. The diameter of a complete graph $G$ is $2$. Therefore, by Theorem \ref{T-Gdiam}, $G^2$ is a complete graph on $m+n$ vertices. Hence, the difference sets of the set-labels of all vertices of $G^r$ must be pairwise disjoint. Therefore, $\varkappa(K_{m,n}^r)=m+n$, for $r\ge 2$.
\end{proof}

Now, we study about the nourishing number of paths. By Theorem \ref{T-NNPC}, the nourishing number of paths is $2$. Hence, we need to study about the nourishing number of arbitrary powers of paths. The following result determines the nourishing number of the $r$-th power of a path.

\begin{theorem}\label{NNPm^r}
The nourishing number of the $r$-th power of a path $P_m$ is 
\begin{equation*}
\varkappa(P_m)=
	\begin{cases}
	r+1 & \text{if}~~ r<m\\
	m+1  & \text{if}~~ r\ge m.
	\end{cases}
\end{equation*}
\end{theorem}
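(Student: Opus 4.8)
The plan is to reduce the computation to finding the order of a largest clique of $P_m^r$. Indeed, by the Remark following Theorem \ref{T-NNKn}, for any graph admitting a strong IASI the nourishing number coincides with the order of a maximal clique, and $P_m^r$ certainly admits a strong IASI (for example, label its vertices with suitable singletons, or apply Theorem \ref{TSK2} using the difference chain associated with a largest clique). So it suffices to compute the clique number of $P_m^r$.

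First I would fix notation: write the vertices of $P_m$ as $v_0,v_1,\dots,v_m$ with $v_iv_{i+1}\in E(P_m)$, so that $P_m$ has $m+1$ vertices and its diameter is $m$. By the definition of the $r$-th power, $v_iv_j\in E(P_m^r)$ if and only if $|i-j|\le r$. The key (easy) claim is that a subset $S\subseteq V(P_m^r)$ is a clique if and only if the corresponding set of indices has spread (maximum minus minimum) at most $r$: if the spread is at most $r$, then any two indices in $S$ differ by at most $r$, so the associated vertices are adjacent; conversely, the vertices with the smallest and largest indices in a clique must be adjacent, which forces the spread to be at most $r$. Hence the largest clique has order equal to the maximum number of integers that can be chosen from $\{0,1,\dots,m\}$ with spread at most $r$, which is $\min\{r+1,\ m+1\}$.

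Now I would split into the two cases of the statement. If $r<m$, then $\min\{r+1,m+1\}=r+1$, realised by the clique $\{v_0,v_1,\dots,v_r\}$, and no larger clique exists; therefore $\varkappa(P_m^r)=r+1$. If $r\ge m$, then $\{v_0,v_1,\dots,v_m\}$ is itself a clique (its index spread is $m\le r$), so $P_m^r=K_{m+1}$; this also follows directly from Theorem \ref{T-Gdiam} since the diameter of $P_m$ is $m$. In this case Theorem \ref{T-NNKn} gives $\varkappa(P_m^r)=m+1$.

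I do not expect a serious obstacle here. The only points needing care are the indexing convention for $P_m$ (so that the stated formula involving $m+1$ is the correct one) and a clean proof of the clique characterization, especially the converse direction that a clique cannot contain two vertices whose indices differ by more than $r$. Once that is in place, the result is an immediate consequence of the Remark after Theorem \ref{T-NNKn}, together with Theorems \ref{T-Gdiam} and \ref{T-NNKn}.
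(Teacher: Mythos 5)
Your proposal is correct and follows essentially the same route as the paper: compute the clique number of $P_m^r$ (which is $\min\{r+1,m+1\}$, via consecutive vertices for the lower bound and the non-adjacency of $v_i$ and $v_{i+r+1}$ for the upper bound), use Theorem \ref{T-Gdiam} for the case $r\ge m$, and invoke Theorem \ref{T-NNKn} together with the remark identifying the nourishing number with the order of a maximal clique. Your spread-at-most-$r$ characterization of cliques is in fact a slightly cleaner way to justify the maximality step than the paper's wording, but the argument is the same in substance.
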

\begin{proof}
Let $P_m=v_1v_2v_3,\ldots v_n;~ n=m+1$. The diameter of $P_m$ is $m$. Then by Theorem \ref{T-Gdiam}, $P_m^m$ is a complete graph on $m+1$ vertices. Therefore, by Theorem \ref{T-NNKn}, for $r\ge m$, the nourishing number of the $r$-th power of $P_m$ is $m+1$.

If $r<m$, then for $1\le i\le n$, the vertices in the set $V'=\{v_i,v_{i+1},v_{i+2}, \ldots, v_{i+r}\}$ are pairwise adjacent and the hence the induced subgraph $\langle V' \rangle$ of $P_m^r$ is a complete graph on $r+1$ vertices. More over, since the vertices $v_i$ and $v_{i+r+1}$ are not adjacent in $P_m^r$, there is no complete subgraph of order $r+1$ or higher. Hence the difference sets of the set-labels of all vertices in $V'$ are pairwise disjoint. Since $i$ is arbitrary, we have the size of a maximal complete subgraph (clique) in $P_m^r$ is $r+1$.  Hence, by Theorem \ref{T-NNKn}, the nourishing number of $P_m^r$ is $r+1$.
\end{proof}

We now proceed to discuss about the admissibility of strong IASI by finite powers of cycles and hence their nourishing number. By Theorem \ref{T-NNPC}, the nourishing numbers of cycles are also $2$. Hence, we need to study about the nourishing number of certain powers of cycles. The following result discusses the nourishing number of the $r$-th power of a cycle.

\begin{theorem}\label{NNCn^r}
The nourishing number of the $r$-th power of a cycle $C_n$ is 
\begin{equation*}
\varkappa(C_n)=
	\begin{cases}
	r+1 & \text{if}~~ r<\lfloor \frac{n}{2} \rfloor\\
	n  & \text{if}~~ r\ge \lfloor \frac{n}{2} \rfloor.
	\end{cases}
\end{equation*}
\end{theorem}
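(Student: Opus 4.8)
The plan is to mirror the structure of the proof of Theorem~\ref{NNPm^r}, since by the Remark after Theorem~\ref{T-NNKn} the nourishing number of a strong IASI graph equals the order of a maximal clique, so the entire task reduces to determining the clique number $\omega(C_n^r)$. First I would recall that the diameter of $C_n$ is $\lfloor \frac{n}{2}\rfloor$, so by Theorem~\ref{T-Gdiam}, whenever $r\ge \lfloor \frac{n}{2}\rfloor$ the graph $C_n^r$ is the complete graph $K_n$, and hence by Theorem~\ref{T-NNKn} its nourishing number is $n$. This disposes of the second case immediately.

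For the case $r<\lfloor \frac{n}{2}\rfloor$, I would label the vertices $v_0,v_1,\ldots,v_{n-1}$ cyclically (indices mod $n$), so that $v_iv_j\in E(C_n^r)$ if and only if the circular distance $\min(|i-j|,\,n-|i-j|)$ is at most $r$. The lower bound $\omega(C_n^r)\ge r+1$ is easy: any $r+1$ consecutive vertices $\{v_i,v_{i+1},\ldots,v_{i+r}\}$ are pairwise within distance $r$ and hence form a clique, so by Theorem~\ref{TSS} the induced strong IASI on this $K_{r+1}$ forces these $r+1$ difference sets to be pairwise disjoint. The substantive part is the matching upper bound $\omega(C_n^r)\le r+1$: I must show no $r+2$ vertices are mutually adjacent. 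Given any set $S$ of $r+2$ vertices, arrange them around the cycle; the $r+2$ cyclic gaps between consecutive chosen vertices sum to $n$, so the largest gap is at most $\frac{n}{r+2}$... but that bound is too weak, so instead I would argue directly: if $S=\{v_{i_1},\ldots,v_{i_{r+2}}\}$ in cyclic order spans an arc of length $\ell<n$, then the two endpoints of that arc are at circular distance $\min(\ell, n-\ell)\le \ell$; since the $r+2$ vertices occupy $r+2$ distinct positions within an arc of length $\ell$, we get $\ell\ge r+1$, and for all $r+2$ to be pairwise adjacent we would need $\ell\le r$, a contradiction unless the vertices ``wrap around'', in which case $n-\ell\le r$ forces $\ell\ge n-r$; combining $\ell\ge n-r$ with $r<\lfloor\frac n2\rfloor$ (so $n-r>\lfloor\frac n2\rfloor\ge r+1$) again makes the span too large for all pairs to be adjacent. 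Making this ``arc span'' argument airtight — carefully handling the wrap-around case and the floor function — is the main obstacle, and I expect to phrase it cleanly as: any clique of $C_n^r$ either lies in $r+1$ consecutive vertices or requires $r\ge\lfloor\frac n2\rfloor$.

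Once the clique number is pinned at $r+1$, I would invoke the Remark following Theorem~\ref{T-NNKn}: the nourishing number is the order of a maximal clique, hence $\varkappa(C_n^r)=r+1$ for $r<\lfloor\frac n2\rfloor$, completing the proof. A sanity check I would include: when $n$ is even and $r=\frac n2-1=\lfloor\frac n2\rfloor-1$, the formula gives $\varkappa=r+1=\frac n2$, and indeed $C_n^{n/2-1}$ is $K_n$ minus a perfect matching, whose clique number is $\frac n2$ — consistent, since $K_n$ minus a perfect matching has no clique larger than $\frac n2$. This confirms the boundary between the two cases is correctly placed at $\lfloor\frac n2\rfloor$.
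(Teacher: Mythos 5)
Your overall route is the same as the paper's: dispose of $r\ge\lfloor \frac{n}{2}\rfloor$ via the diameter and Theorem~\ref{T-Gdiam}, and reduce the case $r<\lfloor \frac{n}{2}\rfloor$ to computing the clique number of $C_n^r$. The second case and the lower bound $\omega(C_n^r)\ge r+1$ (from $r+1$ consecutive vertices) are fine and match the paper. Where you go beyond the paper is the upper bound: the paper only observes that the $r+2$ \emph{consecutive} vertices $v_i,\ldots,v_{i+r+1}$ fail to be a clique, which does not rule out non-consecutive cliques; you correctly identify that as the substantive point. (Minor slip in a line you discard anyway: the largest of the $r+2$ cyclic gaps is at \emph{least} $n/(r+2)$, not at most.)

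However, your resolution has a genuine gap exactly where you flag one: the wrap-around case. If the minimal arc containing your $r+2$ chosen vertices has length $\ell\ge n-r$, then the complementary arc (the largest gap) has length $n-\ell\le r$, so the two endpoints of the spanning arc \emph{are} adjacent, via that short complementary arc, and no contradiction arises from the pair you are examining; ``the span is too large'' is not in itself a violation of adjacency. Concretely, in $C_{10}^{4}$ the set $\{v_0,v_1,v_2,v_3,v_4,v_6\}$ has spanning arc of length $6=n-r$ whose endpoints $v_0,v_6$ are at distance $4\le r$; the set fails to be a clique only because of the interior pair $v_1,v_6$. To close the argument, work with the cyclic gaps $g_1,\ldots,g_k$ of a putative clique of size $k$: adjacency of cyclically consecutive clique vertices forces each $g_i\le r$ or $g_i\ge n-r$, and since $2(n-r)>n$ at most one gap can be $\ge n-r$. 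If some gap is $\ge n-r$, the remaining $k-1$ gaps sum to at most $r$, giving $k\le r+1$ (the clique sits in $r+1$ consecutive vertices). The residual case, all gaps $\le r$, is the one neither you nor the paper handles; one checks it can only occur when $n\le 3r$ (a partial sum of gaps must jump from $[1,r]$ to $[n-r,n-1]$ in a single step of size at most $r$), and it needs a separate argument (for even $n$ and $r=\frac{n}{2}-1$ it is your sanity check that $K_n$ minus a perfect matching has clique number $\frac{n}{2}$, but the range $2r+2\le n\le 3r$ in general is not covered). So your proposal is more careful than the paper's own proof, which never addresses non-consecutive vertex sets at all, but it still does not fully establish $\omega(C_n^r)\le r+1$.
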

\begin{proof}
Let $P_m=v_1v_2v_3,\ldots v_nv_1$. The diameter of $C_n$ is $\lfloor \frac{n}{2} \rfloor$. Then by Theorem \ref{T-Gdiam}, $C_n^{\lfloor \frac{n}{2} \rfloor}$ is a complete graph on $n$ vertices. Therefore, by Theorem \ref{T-NNKn}, for $r\ge \lfloor \frac{n}{2} \rfloor$, the nourishing number of the $r$-th power of $C_n$ is $n$.

If $r<\lfloor \frac{n}{2} \rfloor$, then for $1\le i\le n$, the vertices in the set $V'=\{v_i,v_{i+1},v_{i+2}, \ldots, v_{i+r}\}$ are pairwise adjacent in $C_n^r$ and the hence the induced subgraph $\langle V' \rangle$ of $C_n^r$ is a complete graph on $r+1$ vertices. Also, since the vertices $v_i$ and $v_{i+r+1}$ are not adjacent in $C_n^r$, there is no complete subgraph of higher order induced by the set of vertices $\{v_i,v_{i+1},v_{i+2} \ldots, v_{i+r+1}\}$. Since the choice of $i$ is arbitrary, we have the size of a maximal complete subgraph (clique) in $P-m^r$ is $r+1$. Hence, by Theorem \ref{T-NNKn}, the difference sets of the set-labels of all vertices in $V'$ are pairwise disjoint and hence the nourishing number of $C_n^r$ is $r+1$.
\end{proof}

A graph that arises interest in this context is a wheel graph which is defined as follows.

\begin{definition}{\rm 
\cite{FH} A {\em wheel graph}, denoted by $W_{n+1}$, is defined as  $W_{n+1}=C_n+K_1$.}
\end{definition}

The nourishing number of the powers of wheel graphs is discussed in the following proposition.

\begin{proposition}
The nourishing number of the $r$-th power of a wheel graph is 
\begin{equation*}
\varkappa(W_{n+1}^r)=
\begin{cases}
3 & \text{if}~~ r=1\\
n+1 & \text{if}~~ r \ge 2.
\end{cases}
\end{equation*}
\end{proposition}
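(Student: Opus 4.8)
The plan is to separate the two cases exactly as in the statement and, in each case, reduce the computation of $\varkappa(W_{n+1}^r)$ to finding the order of a maximal clique, invoking the Remark following Theorem~\ref{T-NNKn} which identifies the nourishing number of a strong IASI graph with the size of its largest clique.

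For the case $r=1$, I would argue directly on $W_{n+1}=C_n+K_1$. The hub vertex is adjacent to every rim vertex, and on the rim $C_n$ the only adjacencies are between consecutive vertices; hence a clique can contain the hub together with at most two consecutive rim vertices, giving a triangle, and no $K_4$ arises (this needs $n\ge 3$, which is implicit since $C_n$ is a cycle). So the maximal clique has order $3$ and $\varkappa(W_{n+1})=3$ by the Remark. One should also note that $W_{n+1}$ does admit a strong IASI with difference chain of length $3$, so that the invariant is well-defined; this follows from Theorem~\ref{TSK1} applied to the triangle, extended to the whole graph via Theorem~\ref{TSK2}, since $W_{n+1}$ has a triangle-free part once the hub is removed.

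For the case $r\ge 2$, the key observation is that $W_{n+1}$ has diameter $2$: any two rim vertices are joined through the hub, and the hub is adjacent to everything. By Theorem~\ref{T-Gdiam}, $W_{n+1}^2$ is the complete graph $K_{n+1}$, and therefore $W_{n+1}^r=K_{n+1}$ for every $r\ge 2$. Then Theorem~\ref{T-NNKn} gives immediately that the difference sets of the set-labels of all $n+1$ vertices must be pairwise disjoint, so $\varkappa(W_{n+1}^r)=n+1$. This case is essentially identical in structure to the corresponding case in the preceding Proposition on $K_{m,n}^r$.

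The main obstacle, such as it is, lies entirely in the $r=1$ case: one must be careful to confirm that no clique larger than a triangle can occur in $W_{n+1}$, which amounts to observing that the rim $C_n$ is triangle-free (true for all $n\ge 4$) while for $n=3$ the graph $W_4=K_4$ and the claimed value $3$ would fail. I would therefore either restrict to $n\ge 4$ or remark that the wheel is conventionally taken with $n\ge 3$ and handle $n=3$ as a degenerate instance where $W_4^1=K_4$ already has $\varkappa=4$. Apart from that edge case, every step is an immediate consequence of the cited results, so the proof is short.
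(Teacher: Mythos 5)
Your proof follows essentially the same route as the paper: for $r=1$ the maximal clique is a triangle formed by the hub and two consecutive rim vertices, and for $r\ge 2$ the diameter-$2$ argument via Theorem~\ref{T-Gdiam} shows $W_{n+1}^r=K_{n+1}$, so Theorem~\ref{T-NNKn} gives $n+1$. Your added caveat about $n=3$ (where $W_4=K_4$ and the stated value $3$ fails, the correct value being $4$) is a genuine point that the paper's own proof overlooks, and restricting to $n\ge 4$ as you suggest is the right fix.
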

\begin{proof}
When $r=1$, we get the nourishing number of $W_{n+1}$ itself. Let $K_1=v$ and $C_n=v_1v_2v_3\ldots v_nv_1$. For $1\le i\le n$, three vertices $v, v_i$ and $v_{i+1}$ form a triangle in $W_{n+1}$. Since $v_i$ and $v_{i+2}$ are not adjacent in $W_{n+1}$, this triangle is the maximal clique in $W_{n+1}$, where the set-labels of these three vertices have disjoint difference sets. Therefore, $\varkappa(W_{n+1})=3$.

Since any two vertices of a wheel graph $W_{n+1}$ are at a distance $2$ from each other, the square of it is a complete graph on $n+1$ vertices. Hence, by Theorem \ref{T-NNKn}, the difference sets of the set-labels of all vertices of $G^2$ are pairwise disjoint. Therefore, $\varkappa(W_{n+1}^r)=n+1$ for all $r\ge 2$.
\end{proof}

Another interesting graph related to a wheel graph is a {\em helm graph} which is defined as follows.

\begin{definition}{\rm 
\cite{JAG} A {\em helm graph}, denoted by $H_n$, is a graph obtained by adjoining a pendant edge to each of the vertices of the outer cycle of a wheel graph $W_{n+1}$. A helm graph has $2n+1$ vertices and $3n$ edges.}
\end{definition}
 
 \noindent The following theorem discusses the nourishing number of a helm graph and its powers.

\begin{theorem}\label{T-NNHelmG}
The nourishing number of the $r$-th power of a helm graph $H_n$ is given by
\begin{equation*}
\varkappa{H_n^r}=
\begin{cases}
3 & \textrm{if}~~~ r=1\\
n+1 & \textrm{if}~~~ r=2\\
n+4 & \textrm{if}~~~ r=3\\
2n+1 & \textrm{if}~~~ r\ge 4.
\end{cases}
\end{equation*}
\end{theorem}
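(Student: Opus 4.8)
The plan is to reduce the whole statement to a computation of clique numbers. Recall that, as noted after Theorem~\ref{T-NNKn}, in a strong IASI graph the nourishing number equals the order of a maximal clique, and that every connected graph carries a strong IASI by Theorem~\ref{TSK2}; hence it suffices to find the clique number of $H_n^r$ for each value of $r$. Set up coordinates: let $v$ be the hub of the underlying wheel $W_{n+1}$, let $v_1,v_2,\ldots,v_n$ be the rim cycle with indices read modulo $n$ (so $v_i$ is adjacent to $v_{i-1}$ and $v_{i+1}$), and let $u_1,u_2,\ldots,u_n$ be the pendant vertices, $u_i$ being the leaf hung on $v_i$.

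The first concrete step is to tabulate the distances in $H_n$. Since every rim vertex is adjacent to the hub, one gets $d(v,v_i)=1$, $d(v,u_i)=2$, $d(v_i,v_j)\le 2$ for $i\ne j$, $d(v_i,u_j)=d(v_i,v_j)+1\le 3$, and $d(u_i,u_j)=d(v_i,v_j)+2$, the last being $3$ precisely when $v_iv_j$ is a rim edge and $4$ otherwise; in particular the diameter of $H_n$ is $4$ once $n\ge 4$. Two of the four cases fall out at once. For $r\ge 4$, Theorem~\ref{T-Gdiam} gives $H_n^r=K_{2n+1}$, so $\varkappa(H_n^r)=2n+1$. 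For $r=1$, the table shows that the only triangles of $H_n$ are the triples $\{v,v_i,v_{i+1}\}$ and that $H_n$ has no $K_4$, so a maximal clique has order $3$.

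For $r=2$ and $r=3$ I would read the adjacency of $H_n^r$ off the table and then exhibit a largest clique together with a matching upper bound. When $r=2$: the hub is adjacent to every vertex, $\{v_1,\ldots,v_n\}$ is a clique, each leaf $u_i$ is adjacent only to $v,v_{i-1},v_i,v_{i+1}$, and no two leaves are adjacent; hence a clique either avoids every leaf (order at most $n+1$, attained by $\{v\}\cup\{v_1,\ldots,v_n\}$) or contains exactly one leaf (order at most $5$), which gives $\varkappa(H_n^2)=n+1$ in the non-degenerate range of $n$. When $r=3$: the hub is still universal and $\{v_1,\ldots,v_n\}$ is still a clique, but now \emph{every} leaf $u_j$ is adjacent to $v$ and to all of $v_1,\ldots,v_n$, while $u_i$ and $u_j$ are adjacent exactly when $v_iv_j$ is a rim edge. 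So a maximal clique is built from $v$, the whole rim, and a maximal set of pairwise ``rim-consecutive'' leaves; one then has to decide how many leaves such a set can hold (equivalently, the clique number of the cycle that the pendants induce in $H_n^3$), and to check that discarding rim vertices cannot buy anything back, since leaves are mutually adjacent only in consecutive pairs.

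The step I expect to be the genuine obstacle is exactly this count in the $r=3$ case: in $H_n^3$ every pendant is adjacent both to the hub and to the entire rim, so the pendants are ``almost free'' to adjoin to a clique, and the value of $\varkappa(H_n^3)$ is controlled entirely by how many of them can lie in a common clique; pinning this number down requires a careful and somewhat delicate look at the pendant-to-pendant distances in $H_n$. Once that is settled, everything else is bookkeeping with the distance table.
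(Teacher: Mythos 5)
Your overall strategy --- reduce everything to the clique number of $H_n^r$ via Theorem~\ref{T-NNKn} and Theorem~\ref{T-Gdiam} plus a distance table --- is exactly the route the paper takes, and your distance table is correct. The cases $r=1$, $r=2$ and $r\ge 4$ go through as you describe (modulo small-$n$ exceptions such as $n=3$, where $H_3$ contains $K_4$ and $H_3^2$ contains the $5$-clique $\{v,v_1,v_2,v_3,u_1\}$; you partly acknowledge this). The genuine gap is that you never carry out the one computation you yourself single out as the crux: how many pendants can lie in a common clique of $H_n^3$. This is not deferrable bookkeeping, because your own (correct) observation that $u_i$ and $u_j$ are adjacent in $H_n^3$ exactly when $v_iv_j$ is a rim edge already settles it: for $n\ge 4$ the pendants induce a copy of $C_n$ in $H_n^3$, whose clique number is $2$, so at most two pendants can be pairwise adjacent (independently of which rim vertices accompany them), and the largest clique of $H_n^3$ has order $n+3$, not $n+4$.

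Closing your gap therefore does not complete a proof of the theorem --- it refutes the $r=3$ case as stated. The paper's own argument asserts that $V\cup\{u,w_{i-1},w_i,w_{i+1}\}$ induces a complete subgraph of $H_n^3$, but $d(w_{i-1},w_{i+1})=2+d(v_{i-1},v_{i+1})=4$ for $n\ge 4$, so $w_{i-1}$ and $w_{i+1}$ are not adjacent in $H_n^3$ and that vertex set is not a clique. You should finish your count, conclude $\varkappa(H_n^3)=n+3$ for $n\ge 4$, and flag the discrepancy with the stated value $n+4$ (which happens to give the correct number only at $n=3$, where it coincides with $2n+1$ and the cube is already complete).
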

\begin{proof}
Let $u$ be the central vertex, $V=\{v_1v_2v_3,\ldots, v_n\}$ be the set of vertices of the cycle $C_n$ and $W=\{w_1,w_2,w_3, \ldots,\\ w_n\}$ be the set of pendant vertices in $H_n$. In $H_n$, the central vertex $u$ is adjacent to each vertex $v_i$ of $V$ and each $v_i$ is adjacent to a vertex $w_i$ in $W$.

The maximal clique in $H_n$ is a triangle formed by the vertices $vv_iv_{i+1}v$, for $1\le i \le n$ (in the sense that $v_{n+1}=v_1$), the difference sets of whose set-labels are pairwise disjoint. Therefore, $\varkappa(H_n)=3$.

Since each vertex $v_i$ in $V$ is adjacent to $u$, the distance between any two vertices in $H_n$ is $2$. Hence, the subgraph of $H_n^2$ induced by $V\cup \{u\}$ is a complete graph on $n+1$ vertices in $H_n^2$. Since the distances between any two $w_i$'s is greater than $2$, there is no complete graph of higher order in $H_n^2$. Therefore, $\varkappa(H_n^2)=n+1$.

Since each vertex $w_i$ in $W$ is at a distance at most $3$ from $u$ as well as from all vertices of $V$, for $1\le i \le n$, the subgraph of $H_n^3$ induced by $V\cup \{u, w_{i-1},w_i, w_{i+1}\}$ is a complete graph. Since the distances between between all other vertices in $W$ and $W_i$ are greater than $3$, there is no complete graph of higher order. Therefore, $\varkappa(H_n^2)=n+4$.

Since $u$ is adjacent all $v_i\in V$, the distance between any two vertices in $H_n$ is $4$. Therefore, $H_n^r; ~~ r\ge 4$ is a complete graph on $2n+1$ vertices. Hence by Theorem \ref{T-NNKn}, for $r\ge 4$, the nourishing number of $H_n^r$ is $2n+1$.
\end{proof}

Another graph we consider in this context is a friendship graph whose definition is give below. 

\begin{definition}{\rm
\cite{JAG, GCO} A {\em friendship graph} $F_n$ is the graph obtained by joining $n$ copies of the cycle  $C_3$ with a common vertex. It has $2n+1$ vertices and $3n$ edges.}
\end{definition}

 The following proposition is about the nourishing number of a friendship graph $F_n$.

\begin{proposition}
The nourishing number of the $r$-th power of a friendship graph $F_n$ is 
\begin{equation*}
\varkappa(F_n^r)=
\begin{cases}
3 & \text{if}~~ r=1\\
2n+1 & \text{if}~~ r\ge 2
\end{cases}
\end{equation*}
\end{proposition}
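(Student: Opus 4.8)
The plan is to follow exactly the pattern of the preceding propositions on wheel and helm graphs, exploiting the fact that $F_n$ has diameter $2$. Write $F_n$ with common vertex $u$ and the $n$ triangles $u\,x_i\,y_i$ for $1\le i\le n$, so that $V(F_n)=\{u\}\cup\{x_i,y_i:1\le i\le n\}$ has $2n+1$ vertices and the edge set is exactly $\{ux_i,uy_i,x_iy_i:1\le i\le n\}$.

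First I would dispose of the case $r=1$, where $\varkappa(F_n^1)=\varkappa(F_n)$. By the Remark following Theorem \ref{T-NNKn}, this equals the order of a maximal clique of $F_n$. Since each $x_i$ (resp.\ $y_i$) is non-adjacent to every vertex lying outside its own triangle, the triangle $u\,x_i\,y_i$ is a maximal clique and $F_n$ contains no clique of order $4$ or more; hence $\varkappa(F_n)=3$, and by Theorem \ref{T-NNKn} the difference sets of the set-labels of $u,x_i,y_i$ are pairwise disjoint.

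Next I would treat $r\ge 2$. The key observation is that the diameter of $F_n$ is $2$: the vertex $u$ is adjacent to every other vertex, any two vertices in a common triangle are adjacent, and any two vertices from distinct triangles (say $x_i$ and $x_j$, or $x_i$ and $y_j$) are joined by a path of length $2$ through $u$. By Theorem \ref{T-Gdiam}, $F_n^2$ is the complete graph on $2n+1$ vertices, and therefore $F_n^r=K_{2n+1}$ for every $r\ge 2$. Applying Theorem \ref{T-NNKn}, the difference sets of the set-labels of all $2n+1$ vertices must be pairwise disjoint, so $\varkappa(F_n^r)=2n+1$ for all $r\ge 2$.

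I do not expect a genuine obstacle here; the only two points needing care are the clique-size claim for $r=1$ (that no $K_4$ embeds in $F_n$) and the diameter computation, both of which are immediate from the definition of $F_n$. If one preferred to avoid invoking Theorem \ref{T-Gdiam}, one could argue directly that in $F_n^2$ every pair of vertices is within distance $2$ in $F_n$ and hence adjacent, which is the same computation; this alternative keeps the argument fully self-contained.
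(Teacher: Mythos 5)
Your argument is correct and follows essentially the same route as the paper: for $r=1$ you identify the triangle $u\,x_i\,y_i$ as the maximal clique, and for $r\ge 2$ you use the diameter-$2$ observation together with Theorem \ref{T-Gdiam} and Theorem \ref{T-NNKn} to conclude $F_n^r=K_{2n+1}$ and hence $\varkappa(F_n^r)=2n+1$. Your version merely spells out the clique-maximality and distance computations a little more explicitly than the paper does.
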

\begin{proof}
The maximal clique in a friendship graph $F_n$ is a triangle. Then, by Theorem \ref{T-NNKn}, $\varkappa(F_n)=3$. Since all the vertices in $F_n$ at a distance $1$ or $2$ among themselves and hence by Theorem \ref{T-Gdiam}, $F_n^2$ is a complete graph on $2n+1$ vertices. Hence, by Theorem \ref{T-NNKn}, $\varkappa(F_n^r)=2n+1$ for all $r\ge 2$.
\end{proof}

Another similar graph structure is a fan graph which is defined as follows.

\begin{definition}
\cite{JAG,BLS} A {\em fan graph}, denoted by $F_{m,n}$,  is defined as the graph join $\bar{K_m}+P_n$.
\end{definition}
\begin{proposition}
The nourishing number of the $r$-th power of a fan graph $F_{m,n}$ is 
\begin{equation*}
\varkappa(F_{m,n}^r)=
\begin{cases}
3 & \text{if}~~ r=1\\
m+n & \text{if}~~ r\ge 2
\end{cases}
\end{equation*}
\end{proposition}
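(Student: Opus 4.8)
The plan is to separate the two cases $r=1$ and $r\ge 2$, exactly mirroring the proofs already given for the wheel, friendship and complete bipartite graphs; throughout I would keep the standing assumption $n\ge 2$, so that $P_n$ has at least one edge (the degenerate case $n=1$ yields the star $K_{1,m}$, which is triangle-free and would give nourishing number $2$).

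First, for $r=1$, I would locate a maximal clique of $F_{m,n}=\bar{K_m}+P_n$. Write $\bar{K_m}=\{u_1,u_2,\dots,u_m\}$ and $P_n=w_1w_2\cdots w_n$. Every $u_i$ is adjacent to every $w_j$, no two $u_i$'s are adjacent, and $w_j$ is adjacent only to $w_{j-1}$ and $w_{j+1}$. Hence any clique contains at most one vertex of $\bar{K_m}$ and at most two consecutive vertices of $P_n$, so the triangle on $\{u_i,w_j,w_{j+1}\}$ is a maximal clique and no clique can have order exceeding $3$. By the Remark following Theorem \ref{T-NNKn} (the nourishing number of a strong IASI graph is the order of a maximal clique in it), this gives $\varkappa(F_{m,n})=3$.

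Second, for $r\ge 2$, I would show that $F_{m,n}$ has diameter $2$. Any two vertices $u_i,u_k$ of $\bar{K_m}$ are joined by $u_iw_1u_k$; any two non-adjacent path vertices $w_j,w_\ell$ are joined by $w_ju_1w_\ell$; and each $u_i$ is adjacent to each $w_j$. Since non-adjacent vertex pairs do exist (as $m\ge 1$ and $n\ge 2$), the diameter is exactly $2$. By Theorem \ref{T-Gdiam}, $F_{m,n}^2$ is the complete graph on $m+n$ vertices, and hence $F_{m,n}^r=K_{m+n}$ for every $r\ge 2$. Theorem \ref{T-NNKn} then yields $\varkappa(F_{m,n}^r)=m+n$, which completes the argument.

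I do not expect any real obstacle here; the only point needing a little care is confirming that the triangle in the $r=1$ case is genuinely maximal, i.e.\ that no fourth vertex can be adjoined, which follows from the non-adjacency of distinct $u_i$'s and of $w_j,w_{j+2}$ in $P_n$, together with the assumption $n\ge 2$ that makes the value $3$ rather than $2$.
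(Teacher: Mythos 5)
Your proof is correct and follows essentially the same route as the paper: identify a triangle as the maximal clique for $r=1$, and use the diameter-$2$ property with Theorem \ref{T-Gdiam} and Theorem \ref{T-NNKn} for $r\ge 2$. You supply more detail than the paper (explicit verification that no clique exceeds order $3$, and the observation that one must assume $n\ge 2$ to avoid the triangle-free star case), but the underlying argument is the same.
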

\begin{proof}
The maximal clique in a fan graph $F_{m,n}$ is a triangle. Then, by Theorem \ref{T-NNKn}, $\varkappa(F_n)=3$. Since all the vertices in $F_{m,n}$ at a distance $1$ or $2$ among themselves and hence by Theorem \ref{T-Gdiam}, $F_{m,n}^2$ is a complete graph on $m+n$ vertices. Hence, by Theorem \ref{T-NNKn}, $\varkappa(F_{m,n}^r)=m+n$ for all $r\ge 2$.
\end{proof}

Now, we proceed to discuss about the nourishing number of split graphs and and their powers. A {\em split graph} is defined as follows.

\begin{definition}{\rm
\cite{BLS} A {\em split graph}, denoted by $G(K_r,S)$, is a graph in which the vertices can be partitioned into a clique $K_r$ and an independent set $S$. A split graph is said to be a {\em complete split graph} if every vertex of the independent set $S$ is adjacent to every vertex of the the clique $K_r$ and is denoted by $K_S(r,s)$, where $r$ and $s$ are the orders of $K_r$ and $S$ respectively. }
\end{definition}

The following theorem estimates the nourishing number of a split graph.

\begin{theorem}\label{T-NNSpG}
The nourishing number of the $r$-th power of a split graph $G=G(K_r,S)^r$ is given by
\begin{equation*}
\varkappa(G^r)=
\begin{cases}
r & \text{if no vertex of $S$ is adjacent to all vertices of $K_r$}\\
r+1 & \text{if some vertices of $S$ are adjacent to all vertices of $K_r$}\\
r+l & \text{if $r=2$}\\
r+s & \text{if $r\ge 3$};
\end{cases}
\end{equation*}
where $l$ is the maximum number of vertices of $S$ which are adjacent to the same vertex of $K_r$ and $s$ is the number of non-isolated vertices in $S$.
\end{theorem}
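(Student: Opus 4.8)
The plan is to reduce the entire statement to a computation of the clique number, since by the remark that the nourishing number of a strong IASI graph equals the order of a maximal clique in it, it suffices to determine the order $\omega(G^r)$ of a maximum clique of $G^r=G(K_r,S)^r$. Write $K_r=\{u_1,\dots,u_r\}$ for the clique and $S$ for the independent set, and let $S_0\subseteq S$ be the set of non-isolated vertices, so $|S_0|=s$ and, since $S$ is independent, $\emptyset\ne N_G(v)\subseteq K_r$ for every $v\in S_0$. The only facts needed are the following elementary distance estimates in $G=G(K_r,S)$: $d_G(u_i,u_j)=1$; $d_G(v,u_i)\le 2$ for all $v\in S_0$ and all $i$ (go through a neighbour of $v$ in $K_r$); and for $v,w\in S_0$, $d_G(v,w)=2$ when $N_G(v)\cap N_G(w)\ne\emptyset$ and $d_G(v,w)=3$ otherwise (route $v$--$u_i$--$u_j$--$w$). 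Isolated vertices of $S$ remain at infinite distance and hence never lie in a clique of any power.

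The cases $r=1$ and $r\ge3$ then follow quickly. For $r=1$ the power is trivial: $G^1=G$ is a star $K_{1,s}$ together with isolated vertices (or just $K_1$ plus isolated vertices when $S_0=\emptyset$), which is triangle-free, so its maximal clique has order $2$ precisely when some vertex of $S$ is adjacent to the unique vertex of $K_1$ and order $1$ otherwise; this is the first two lines, since ``adjacent to all vertices of $K_1$'' means adjacent to that single vertex. For $r\ge3$, the distance estimates give $d_G(x,y)\le3\le r$ for every pair $x,y$ of vertices of $K_r\cup S_0$, so $K_r\cup S_0$ induces a complete subgraph of $G^r$ on $r+s$ vertices; as every remaining vertex is isolated, this is a maximum clique and $\varkappa(G^r)=r+s$. (Equivalently, the non-trivial component of $G$ has diameter at most $3\le r$, so by Theorem~\ref{T-Gdiam} it is complete in $G^r$ and Theorem~\ref{T-NNKn} applies.)

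The case $r=2$ is the delicate one, and I expect the packing argument there to be the main obstacle. In $G^2$ adjacency means distance at most $2$, so a vertex set $Q$ is a clique iff every two vertices of $Q\cap S_0$ share a common neighbour in $K_2=\{u_1,u_2\}$ (pairs meeting $\{u_1,u_2\}$ are automatically within distance $2$). The key observation is that a vertex $v\in S_0$ with $N_G(v)=\{u_1\}$ and a vertex $w\in S_0$ with $N_G(w)=\{u_2\}$ have no common neighbour and are non-adjacent in $G$, so $d_G(v,w)=3$ and they cannot lie in a common clique of $G^2$; hence $Q\cap S_0$ must be contained entirely in $\{v\in S:u_1\in N_G(v)\}$ or entirely in $\{v\in S:u_2\in N_G(v)\}$. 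This forces $|Q|\le2+l$, where $l$ is the larger of the sizes of these two sets, that is, the maximum number of vertices of $S$ adjacent to the same vertex of $K_2$; and conversely $\{u_1,u_2\}$ together with all vertices of $S$ adjacent to whichever of $u_1,u_2$ is more popular is a clique of exactly this size, so $\omega(G^2)=r+l$. Combining the three cases with the remark above gives the stated formula for $\varkappa(G^r)$; the only bookkeeping requiring care is that a vertex of $S$ adjacent to both $u_1$ and $u_2$ is counted in both sets defining $l$, which is precisely what makes $2+l$ simultaneously an upper bound and attained.
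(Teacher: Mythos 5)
Your reduction to the clique number of $G^r$ and your distance estimates follow the same basic strategy as the paper, but you have parsed the overloaded symbol $r$ differently, and this changes what you actually prove. In the paper's proof the four lines are indexed by the \emph{power}, with the clique order (also called $r$) an independent parameter: lines one and two compute $\varkappa(G)$ for a split graph with a clique of arbitrary order, distinguished by whether some vertex of $S$ dominates $K_r$; line three computes $\varkappa(G^2)$; line four computes $\varkappa(G^k)$ for $k\ge 3$. You instead read ``$r=1$'' and ``$r=2$'' as forcing the clique itself to be $K_1$ (a star) and $K_2$. Consequently your argument for the first two lines covers only split graphs whose clique is a single vertex (the general case is easy --- a clique meets $S$ in at most one vertex, so $\omega(G)\in\{r,r+1\}$ --- but you did not give it), and your careful packing argument for the $r+l$ line covers only cliques of order two: the pigeonhole step ``$Q\cap S_0$ lies entirely inside the neighbourhood of $u_1$ or entirely inside that of $u_2$'' has no analogue once the clique has three or more vertices. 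Relative to the statement the paper intends, that is a genuine gap.

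That said, your instinct that the square is ``the delicate one'' is vindicated: the paper's own claim $\varkappa(G^2)=r+l$ is false for larger cliques. Take $K_3=\{u_1,u_2,u_3\}$ and $S=\{w_1,w_2,w_3\}$ with $N(w_1)=\{u_1,u_2\}$, $N(w_2)=\{u_2,u_3\}$, $N(w_3)=\{u_1,u_3\}$; every two vertices of $S$ share a neighbour, so $G^2=K_6$ and $\omega(G^2)=6$, while $l=2$ gives $r+l=5$. The paper's proof breaks at the assertion that the largest set $S_1$ of vertices of $S$ sharing a \emph{single} common neighbour yields a maximum clique of $G^2$: a clique of $G^2$ only requires \emph{pairwise} common neighbours, which for cliques of order at least $3$ need not be witnessed by one vertex. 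Your two-vertex analysis is exactly the case where pairwise sharing collapses to a common witness, which is why your version of the $r+l$ line is correct while the paper's general version is not. To prove the theorem as intended you would have to either restrict that line as you did or replace $l$ by the correct invariant for general cliques; your $r\ge 3$ and diameter arguments are fine and agree with the paper.
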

\begin{proof}
Let $G$ be a split graph consisting of a clique $K_r$ and an independent set $S$ that contains $s$ vertices. The isolated vertices in $S$ do not have any impact on the adjacency in the ppowers of $G$ and hence we need not consider isolated vertices in $S$. Hence, without loss of generality, let $S$ has no isolated vertices in $G$. 

If there is no vertex of $S$ which has adjacency with every vertex of $K_r$, then $K_r$ itself is the maximal clique in $G$ and hence $\varkappa(G)=r$.

Assume that some vertices of $S$ are adjacent to all vertices of $K_r$. Let $w_i$ be a vertex in $S$ that is adjacent to every vertex of $K_r$. Then $K_r+\{w_i\}$ is a complete subgraph of $G$.Since no vertices in $S$ are adjacent among themselves, This subgraph is a maximal clique in $G$. Therefore, $\varkappa(G)=r+1$.

Let $S_1=\{w_1,w_2,\ldots, w_l\}$ be the set of vertices in $S$ that are adjacent to a vertex, say $v_i$ in $K_r$. Then each of them is at distance $2$ among themselves and from all other vertices of $K_r$. Therefore, $G_1=K_r\cup S_1$ is a complete graph in $G^2$. Since, $S_1$ is the maximal set of that kind, $G_1$ is a maximal clique in $G^2$. Therefore, $\varkappa(G^2)=r+l$.

All vertices in a split graph $G$ are at a distance at most $3$ in $G$. Therefore, $G^3$ is a complete graph on $r+s$ vertices. Therefore, $\varkappa(G^r)=r+s$, for all values of $r\ge 3$. 
\end{proof}

In view of Theorem \ref{T-NNSpG}, we establish the following result on complete split graphs.

\begin{corollary}
The nourishing number of the $k$-th power of a complete split graph $G=K_S(r,s)$ is given by
\begin{equation*}
\varkappa(G^r)=
\begin{cases}
r+1 & \text{if $k=1$}\\
r+s & \text{if $k\ge 2$};
\end{cases}
\end{equation*}
\end{corollary}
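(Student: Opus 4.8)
The plan is to obtain this corollary as a direct specialisation of Theorem \ref{T-NNSpG}. In a complete split graph $G=K_S(r,s)$, every vertex of the independent set $S$ is adjacent to every vertex of the clique $K_r$ by definition; in particular $S$ has no isolated vertex, so the number of non-isolated vertices of $S$ is exactly $s$, and the parameter $l$ appearing in Theorem \ref{T-NNSpG} (the maximum number of vertices of $S$ sharing a common neighbour in $K_r$) equals $s$ as well.

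For $k=1$, I would note that since some vertex of $S$ (indeed every vertex of $S$) is adjacent to all vertices of $K_r$, we fall into the second case of Theorem \ref{T-NNSpG}, giving $\varkappa(G)=r+1$. For $k\ge 2$, I would first treat $k=2$: substituting $l=s$ into the third case of Theorem \ref{T-NNSpG} (clique order $2$) gives $\varkappa(G^2)=r+l=r+s$, while the fourth case (clique order $\ge 3$) already yields $\varkappa(G^2)=r+s$; hence in all situations $\varkappa(G^2)=r+s$. For $k\ge 3$ one can invoke the fourth case of Theorem \ref{T-NNSpG} directly. Alternatively — and more cleanly — I would observe that any two vertices of $S$ are joined through any vertex of $K_r$, so the diameter of $K_S(r,s)$ is at most $2$; therefore by Theorem \ref{T-Gdiam} the graph $G^k$ is the complete graph $K_{r+s}$ for every $k\ge 2$, and Theorem \ref{T-NNKn} gives $\varkappa(G^k)=r+s$.

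There is no genuine obstacle here; the only point needing care is the bookkeeping that the two subcases $r=2$ and $r\ge 3$ of Theorem \ref{T-NNSpG} both collapse to the common value $r+s$ once $l=s$ is substituted, which is what allows them to be merged into the single clause $k\ge 2$. It is also worth a sentence to record that the small cases are consistent — when $s\le 1$ the graph $K_S(r,s)$ is already complete on $r+s$ vertices, so both branches of the formula reduce correctly — thereby confirming that the stated piecewise expression is valid for all admissible $r$ and $s$.
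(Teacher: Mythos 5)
Your proposal is correct and follows essentially the same route as the paper: the $k=1$ case is read off from the second clause of Theorem \ref{T-NNSpG}, and the $k\ge 2$ case is settled by noting that all vertices of $S$ are pairwise at distance $2$ through the clique, so $G^2$ (and every higher power) is complete on $r+s$ vertices. Your additional bookkeeping --- checking that the $l=s$ substitution makes the remaining clauses of Theorem \ref{T-NNSpG} collapse to $r+s$, and verifying the degenerate cases $s\le 1$ --- is a harmless refinement of the paper's shorter argument.
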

\begin{proof}
Since every vertex of $S$ is adjacent to all vertices of $K_r$ in $G$, by Theorem \ref{T-NNSpG}, $\varkappa(G)=r+1$. Also, in $G$, being adjacent to all vertices of $K_r$, all vertices in $S$ are at a distance $2$ among themselves, $K_r\cup S$ is a complete graph on $r+s$ vertices in $G^2$. Therefore, $\varkappa(G^k)=r+s$ for all $k\ge 2$.
\end{proof}

\begin{definition}{\rm
\cite{BLS,GCO} An {\em $n$-sun} or a {\em trampoline}, denoted by $S_n$,  is a chordal graph on $2n$ vertices, where $n\ge 3$, whose vertex set can be partitioned into two sets $U = \{u_1,u_2,u_3,\ldots, u_n\}$ and $W = \{w_1,w_2,w_3,\ldots, w_n\}$ such that $W$ is an independent set of $G$ and $w_j$ is adjacent to $u_i$ if and only if $j=i$ or $j=i+1~(mod ~ n)$. {\em A complete sun} is a sun $G$ where the induced subgraph $\langle U \rangle$ is complete.} 
\end{definition}

The sun graphs with $\langle U \rangle$ is a cycle is one of the most interesting structures among the sun graphs.  The following theorem determines the nourishing number of a sun graph $G$ whose non-independent set of vertices induces a cycle in the graph $G$.

\begin{theorem}\label{T-NNSunG}
If $G$ is an $n$-sun graph with $\langle U \rangle = C_n$, then the nourishing number of $G^r$ is given by
\begin{equation*}
\varkappa(G^r)=
\begin{cases}
2r+1 & \text{if}~~~ r<\lfloor \frac{n}{2}\rfloor\\
2(n-1) & \text{if}~~~ r=\lfloor \frac{n}{2}\rfloor ~\text{and if~ $n$ ~ is odd}\\
2n-1 & \text{if}~~~ r=\lfloor \frac{n}{2}\rfloor ~\text{and if~ $n$ ~ is even}\\
2n & \text{if}~~~ r\ge \lfloor \frac{n}{2}\rfloor +1;
\end{cases}
\end{equation*}
\end{theorem}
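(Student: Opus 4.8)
The plan is to use the Remark following Theorem~\ref{T-NNKn}: for a strong IASI graph the nourishing number is the order of a largest clique, so it suffices to compute the clique number of $S_n^r$, i.e.\ the largest size of a set of vertices of $S_n$ that are pairwise within distance $r$. The first step is therefore to record the three types of pairwise distance in $S_n$ (with $\langle U\rangle=C_n$), using that $w_j$ is adjacent precisely to $u_{j-1}$ and $u_j$ and writing all indices cyclically: $d(u_i,u_k)$ is the cyclic distance of $i$ and $k$ in $C_n$; $d(u_i,w_j)=1+\min\{d_{C_n}(i,j-1),\,d_{C_n}(i,j)\}$; and, for $j\ne l$, $d(w_j,w_l)=2+\min\{d_{C_n}(a,b):a\in\{j-1,j\},\ b\in\{l-1,l\}\}$. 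The last formula also yields $\operatorname{diam}(S_n)=\lfloor n/2\rfloor+1$, so the case $r\ge\lfloor n/2\rfloor+1$ is immediate: by Theorem~\ref{T-Gdiam}, $S_n^r=K_{2n}$, hence $\varkappa(S_n^r)=2n$ by Theorem~\ref{T-NNKn}.

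For $r<\lfloor n/2\rfloor$ I would exhibit a clique of size $2r+1$ and then prove optimality. For the lower bound take the $r+1$ consecutive cycle vertices $u_i,u_{i+1},\dots,u_{i+r}$ together with the $r$ pendant vertices $w_{i+1},\dots,w_{i+r}$; the distance formulas (and $r<n/2$, so no wraparound interferes) show every pair among these is within distance $r$, the tight ones being $\{w_{i+1},w_{i+r}\}$, $\{w_{i+1},u_{i+r}\}$ and $\{w_{i+r},u_i\}$, each at distance exactly $r$. For the upper bound let $Q$ be a clique, $A=\{i:u_i\in Q\}$, $B=\{j:w_j\in Q\}$; if $A=\emptyset$ then the $w$-$w$ formula forces the dominoes $\{j-1,j\}$, $j\in B$, to be pairwise within cyclic distance $r-2$, so $|Q|\le r$. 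If $A\ne\emptyset$, then since $r<\lfloor n/2\rfloor$ the clique structure of $C_n^r$ (as in the proof of Theorem~\ref{NNCn^r}) places $A$ in a cyclic arc with endpoints $p,q$ and $q-p\le r$; applying the $u$-$w$ formula at $i=p$ and at $i=q$ confines $B$ to the cyclic interval from $q-r+1$ to $p+r$, of size $2r-(q-p)$, whence $|Q|=|A|+|B|\le(q-p+1)+(2r-(q-p))=2r+1$. Thus $\varkappa(S_n^r)=2r+1$.

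The main obstacle is the boundary case $r=\lfloor n/2\rfloor$. Now $C_n^r=K_n$, so the $U$-part of $S_n^r$ is already complete and everything hinges on determining exactly which $U$-$W$ edges and which $W$-$W$ edges are \emph{absent} from $S_n^r$; by the two distance formulas this amounts to deciding when a position, resp.\ a ``domino'' $\{j-1,j\}$, lies at cyclic distance $\ge r-1$, resp.\ $\ge r-2$, from a target position, resp.\ domino. The parity of $n$ is decisive: for even $n$ each position has a unique antipode, whereas for $n=2m+1$ the two near-antipodes $m$ and $m+1$ are themselves adjacent, and these two pictures yield the two different patterns of surviving non-edges that should account for the values $2(n-1)$ and $2n-1$ in the statement. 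Once $S_n^r$ is described as $K_{2n}$ with this explicit edge set deleted, I would finish as before: for a clique with index sets $A,B$, bound $|A|+|B|$ using that $A$ is now unrestricted apart from the deleted $U$-$W$ edges while $B$ must induce a clique in the circulant graph spanned by the $w$-vertices, and match the bound with an explicit extremal clique. Getting the parity-dependent count of missing edges exactly right, and then verifying the resulting optimization on small cases such as $n=4,5,6,7$, is the step I expect to be genuinely delicate and would check most carefully.
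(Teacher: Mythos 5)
Your handling of the two extreme cases is fine, and for $r<\lfloor n/2\rfloor$ your two-sided argument (the explicit $(2r+1)$-clique, plus the upper bound trapping $B$ in the interval $[q-r+1,\,p+r]$ once $A$ sits in an arc $[p,q]$ with $q-p\le r$) is actually more complete than the paper's, which only exhibits the clique and argues it cannot be \emph{extended}, not that no larger clique exists elsewhere. One small repair is needed there: for $n/3\le r<\lfloor n/2\rfloor$ a clique of $C_n^r$ need not lie in an arc of length $r$ (e.g.\ $\{u_0,u_3,u_6\}$ in $C_9^3$), so the case of a non-arc $A$ has to be disposed of separately (such an $A$ admits very few, in fact no, compatible $w$'s, so the bound survives). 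The genuine gap, however, is the boundary case $r=\lfloor n/2\rfloor$: you describe a programme for it but do not carry it out, and that is precisely where the two parity-dependent values of the statement live, so the proposal does not prove the theorem as stated.

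You should also know that completing your programme does not confirm those two values — it refutes them, and your instinct to test $n=4,5,6,7$ is exactly what exposes the problem. Take $n=5$, $r=2$: your distance formulas give $d(u_i,w_{i+3})=1+\min\{d(i,i+2),d(i,i+3)\}=3>2$, so the missing $U$--$W$ pairs of $S_5^2$ form a perfect matching, and $d(w_j,w_l)\le 2$ only when the two dominoes overlap, so $W$ induces a $5$-cycle. The complement of $S_5^2$ therefore contains a perfect matching on all $10$ vertices, forcing every clique to have size at most $5$ (attained by $U$), not $2(n-1)=8$; in particular the paper's assertion that $w_n$ is adjacent in $G^{\lfloor n/2\rfloor}$ to every vertex of $V$ is false for odd $n$, and its set $V\cup[W\setminus\{w_r,w_{r+1}\}]$ is not a clique. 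Likewise for $n=6$, $r=3$ every $u_i$ is adjacent to every $w_j$ but $w_j\not\sim w_{j+3}$, so the clique number is $6+3=9$ rather than $2n-1=11$. Your method, carried to completion, yields $\varkappa(G^{\lfloor n/2\rfloor})=n$ for odd $n$ and $3n/2$ for even $n$; so the deferred case is not merely delicate — it is where the theorem itself fails, and any honest write-up along your lines must correct the statement rather than prove it.
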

\begin{proof}
Let $r< \lfloor \frac{n}{2}\rfloor$, then the subgraph of $G^r$ induced by the vertex set $V'=\{w_{n-r+1}, w_{n-r+2},w_{n-r+3},\ldots,w_n, v_{n-r+1},v_{n-r+2},v_{n-r+3}, \ldots, v_n,v_1\}$ is a clique of order $2r+1$ in $G^r$. Since neither $w_{n-r}$ is adjacent to $v_{i+1}$ nor $w_{n-r+1}$ is adjacent to $v_{i+2}$, this clique is a maximal clique in $G^r$. Therefore, by Theorem \ref{T-NNKn}, $\varkappa(G^r)=2r+1$.

If $r= \lfloor \frac{n}{2} \rfloor$ and $n$ is odd, then the vertex $w_n$ is adjacent to every vertex of $V$ and to the vertices of $W$ except $w_r$ and $w_{r+1}$. Then, $V'=V\cup[W-\{w_r,w_{r+1}\}]$ induces a maximal clique of order $2n-2$ in $G$. Therefore, by Theorem \ref{T-NNKn}, if $r= \lfloor \frac{n}{2} \rfloor$ and $n$ is odd, then the nourishing number of $G^r$ is $2(n-1)$.

If $r= \lfloor \frac{n}{2} \rfloor +1$ and $n$ is even, then the vertex $w_n$ is adjacent to every vertex of $V$ and to the vertices of $W$ except $w_r$ in $G^r$. Then, $V'=V\cup[W-\{w_r\}]$ induces a maximal clique of order $2n-1$ in $G^r$. Therefore by Theorem \ref{T-NNKn}, if $r= \lfloor \frac{n}{2} \rfloor$ and $n$ is odd, then the nourishing number of $G^r$ is $2n-1$.

Since all the vertices in $S_n$ are at a distance at most $1+\lfloor \frac{n}{2} \rfloor$, $G^r$ is a complete graph for all $r\ge 1+\lfloor \frac{n}{2} \rfloor$ on $2n$ vertices. Hence by \ref{T-NNKn}, for $r\ge \lfloor \frac{n}{2} \rfloor+1$, the nourishing number of $G^r$ is $2n$.
\end{proof}

\begin{theorem}\label{T-NNKSunG}
The nourishing number of a complete $n$-sun graph $G$ is given by
\begin{equation*}
\varkappa(G^r)=
\begin{cases}
n & \text{if}~~~ r=1 ~\text{and if}~~~ \langle U \rangle \text{is triangle-free}\\
n+1 & \text{if}~~~ r=2\\
2n & \text{if}~~~ r\ge 3;
\end{cases}
\end{equation*}
\end{theorem}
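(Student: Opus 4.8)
The plan is to reduce the entire statement to a computation of clique numbers. By the remark following Theorem~\ref{T-NNKn}, a graph admitting a strong IASI has nourishing number equal to the order of a largest clique in it, so for a complete $n$-sun $G$ it suffices to determine $\omega(G)$, $\omega(G^{2})$ and $\omega(G^{r})$ for $r\ge 3$. Throughout I would write $U=\{u_{1},\dots,u_{n}\}$ for the set of vertices inducing $K_{n}$ and $W=\{w_{1},\dots,w_{n}\}$ for the independent set, with indices read modulo $n$, so that $N_{G}(w_{i})=\{u_{i-1},u_{i}\}$.

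The two outer cases are short. For $r=1$: $\langle U\rangle$ is a clique of order $n$, and since $W$ is independent and each $w_{i}$ has degree $2$ in $G$, every clique of $G$ meeting $W$ has order at most $3$; hence for $n\ge 3$ a largest clique of $G$ is $\langle U\rangle$ and $\varkappa(G)=n$. For $r\ge 3$: I would compute the diameter of $G$. Completeness of $\langle U\rangle$ gives $d_{G}(u_{i},u_{j})=1$; the edge $w_{j}u_{j}$ together with that completeness gives $d_{G}(u_{i},w_{j})\le 2$; and when $i$ and $j$ differ by more than $1$ modulo $n$ the vertices $w_{i}$ and $w_{j}$ have no common neighbour while $w_{i}\,u_{i}\,u_{j}\,w_{j}$ is a path, so $d_{G}(w_{i},w_{j})=3$. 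Thus $G$ has diameter $3$ as soon as $n\ge 4$, and Theorem~\ref{T-Gdiam} makes $G^{r}=K_{2n}$ for every $r\ge 3$, whence $\varkappa(G^{r})=2n$ by Theorem~\ref{T-NNKn}; the small value $n=3$, where $G$ already has diameter $2$, has to be inspected on its own.

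The case $r=2$ is the crux and the step I expect to be the main obstacle. Here $U$ still induces a clique of $G^{2}$; moreover $d_{G}(u_{i},w_{j})\le 2$ for all $i,j$, so every vertex of $W$ is adjacent in $G^{2}$ to every vertex of $U$, and therefore a largest clique of $G^{2}$ is obtained by adjoining to $U$ a largest clique of the graph that $G^{2}$ induces on $W$. Now $w_{i}$ and $w_{j}$ are adjacent in $G^{2}$ exactly when they share a neighbour in $U$, that is, exactly when $i$ and $j$ differ by at most $1$ modulo $n$; so for $n\ge 4$ that induced graph is a cycle $C_{n}$. Determining its clique number, checking that the corresponding set of consecutive $w$-vertices together with $U$ is genuinely a clique of $G^{2}$, and treating the exceptional value $n=3$ (where $G^{2}$ is already complete on $2n$ vertices) separately, is the bookkeeping that remains; once the largest clique of $G^{2}$ is pinned down, Theorem~\ref{T-NNKn} yields $\varkappa(G^{2})$.
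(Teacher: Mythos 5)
Your reduction to clique numbers is exactly the paper's strategy, and your handling of $r=1$ and $r\ge 3$ matches the paper's proof (which likewise takes $\langle U\rangle=K_n$ as the largest clique of $G$ and uses the fact that every pair of vertices of $G$ is within distance $3$ to make $G^r$ complete for $r\ge 3$). But the step you defer as ``bookkeeping'' is precisely where the proof of the middle line must live, and carrying out your own plan does not produce the value the theorem asserts. You correctly observe that $w_i$ and $w_j$ are adjacent in $G^2$ exactly when they share a neighbour in $U$, i.e.\ exactly when $i$ and $j$ are consecutive modulo $n$, so that for $n\ge 4$ the graph induced by $G^2$ on $W$ is the cycle $C_n$. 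Its clique number is $2$, and since every $w_i$ is adjacent in $G^2$ to all of $U$, the set $U\cup\{w_i,w_{i+1}\}$ is a clique of $G^2$ of order $n+2$, not $n+1$; for $n=3$ one even gets $G^2=K_6$, so $\varkappa(G^2)=2n$ there. Thus your proposal, completed along the lines you set out, refutes the $r=2$ case rather than proving it.

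The discrepancy is not a slip on your side. The paper's proof of the $r=2$ case hinges on the assertion that ``no two vertices in $W$ are at a distance $2$,'' which is false: $w_i$ and $w_{i+1}$ have the common neighbour $u_i$ (this holds for any sun graph under the stated adjacency rule $w_j\sim u_i$ iff $j=i$ or $j=i+1 \pmod n$). So no amount of further work will turn your outline into a proof of the statement as written; what remains to be decided is whether to correct the claimed value to $n+2$ (for $n\ge 4$) or to change the definition of the graph. A smaller point: the hypothesis ``$\langle U\rangle$ is triangle-free'' attached to the $r=1$ line is vacuous for a complete sun with $n\ge 3$, where $\langle U\rangle=K_n$; your $r=1$ argument rightly ignores it.
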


\begin{proof}
If $r=1$, $K_n$ itself is the largest clique in the the complete sun graph $G$. Therefore, $\varkappa(G)=n$.

Each vertex $w_i$ in the independent set $W$ is adjacent to two vertices of $U$ and is at a distance $2$ from all other vertices of $U$. Hence, each vertex $w_i$ is adjacent to all vertices of $U$ in $G^r$. The subgraph of $G^2$ induced by $U\cup \{w_i\}$ is a clique in $G^2$. Since no two vertices in $W$ are at a distance $2$, this clique is a maximal clique in $G^2$. Therefore, $\varkappa(G^2)=n+1$.

Since all vertices in $G$ are at a distance at most $3$, the cube and higher powers of $G$ are complete graphs on $2n$ vertices. Therefore, by \ref{T-NNKn}, $\varkappa(G^r)=2n$ for $r\ge 3$.
\end{proof}

Another important graph is a sunlet graph. An $n$-sunlet graph is defined as follows.
\begin{definition}{\rm
\cite{GCO} The {\em $n$-sunlet} graph is the graph on $2n$ vertices obtained by attaching one pendant edge to each vertex of a cycle $C_n$.}
\end{definition}

The following result discusses about the nourishing number of an $n$-sunlet graph. 

\begin{theorem}
The nourishing number of the $r$-th power of an $n$-sunlet graph $G$ is
\begin{equation*}
\varkappa(G^r)=
	\begin{cases}
	2r & \text{if}~~ r<\lfloor \frac{n}{2} \rfloor +1\\
	2(n-1) & \text{if}~~ r= \lfloor \frac{n}{2} \rfloor+1 ~\text{and ~$n$~ is odd}\\ 
	2n-1 & \text{if}~~ r= \lfloor \frac{n}{2} \rfloor+1 ~\text{and ~$n$~ is even}\\
	2n & \text{if}~~ r\ge \lfloor \frac{n}{2} \rfloor+2.
	\end{cases}
\end{equation*} 
\end{theorem}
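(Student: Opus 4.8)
The strategy mirrors the proofs of Theorems \ref{T-NNSunG} and \ref{T-NNKSunG}. By the remark following Theorem \ref{T-NNKn}, for a graph admitting a strong IASI the nourishing number equals the order of a largest clique, so the whole task is to compute $\omega(G^r)$ for the $n$-sunlet $G$, a purely metric problem. Write the cycle as $C_n=v_1v_2\ldots v_nv_1$ and let $w_i$ be the pendant vertex at $v_i$. Since the only neighbour of $w_i$ is $v_i$, we have $d_G(v_i,v_j)=d_{C_n}(v_i,v_j)$, $d_G(w_i,v_j)=1+d_{C_n}(v_i,v_j)$ and $d_G(w_i,w_j)=2+d_{C_n}(v_i,v_j)$; hence in $G^r$ the pair $v_iv_j$ is an edge iff $d_{C_n}(v_i,v_j)\le r$, the pair $w_iv_j$ iff $d_{C_n}(v_i,v_j)\le r-1$, and $w_iw_j$ iff $d_{C_n}(v_i,v_j)\le r-2$; in particular $\operatorname{diam}(G)=\lfloor n/2\rfloor+2$. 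Two observations will be used throughout: the cycle vertices of any clique of $G^r$ induce a clique of $C_n^r$, and the pendant vertices of any clique of $G^r$ project (via $w_i\mapsto v_i$) to a clique of the graph on $V(C_n)$ whose edges join vertices at cycle-distance $\le r-2$; so by Theorem \ref{NNCn^r} a clique of $G^r$ has at most $\omega(C_n^r)$ cycle vertices, and at most $r-1$ pendant vertices when $2\le r<\lfloor n/2\rfloor$.

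First I would clear the two extreme regimes. For $r\ge\lfloor n/2\rfloor+2=\operatorname{diam}(G)$, Theorem \ref{T-Gdiam} gives $G^r=K_{2n}$, so $\varkappa(G^r)=2n$ by Theorem \ref{T-NNKn}. For $1<r<\lfloor n/2\rfloor$, the two observations together with Theorem \ref{NNCn^r} bound every clique of $G^r$ by $(r+1)+(r-1)=2r$ vertices, and this bound is attained by $\{v_1,\ldots,v_{r+1}\}\cup\{w_2,\ldots,w_r\}$: since $r<\lfloor n/2\rfloor$ the arc $v_1v_2\ldots v_{r+1}$ is geodesic, so these $r+1$ cycle vertices are pairwise within cycle-distance $r$, each $w_j$ with $2\le j\le r$ is within cycle-distance $r-1$ of every $v_i$ with $1\le i\le r+1$, and any two of $w_2,\ldots,w_r$ are within cycle-distance $r-2$. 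Hence $\varkappa(G^r)=2r$ for $1<r<\lfloor n/2\rfloor$, while the leftover value $r=1$ of this regime is simply the assertion that $G$ is triangle-free, covered by Theorem \ref{T-NNPC}.

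What remains are the two powers $r=\lfloor n/2\rfloor$ and $r=\lfloor n/2\rfloor+1$ nearest the diameter, and this is the step I expect to be the genuine obstacle, because $\operatorname{diam}(C_n)=\lfloor n/2\rfloor$ forces $C_n^r=K_n$ on the cycle for both of them and the clean ``arc'' picture collapses. The power $r=\lfloor n/2\rfloor+1$ is still tractable: there $d_{C_n}(v_i,v_j)\le\lfloor n/2\rfloor=r-1$ for all $i,j$, so every pendant is adjacent in $G^r$ to every cycle vertex, and hence a maximum clique of $G^r$ is $V(C_n)$ together with a maximum clique of the graph on the pendants in which $w_iw_j$ is an edge iff $d_{C_n}(v_i,v_j)\le r-2$, i.e.\ a copy of $C_n^{\lfloor n/2\rfloor-1}$; Theorem \ref{NNCn^r} evaluates its clique number, and separating the parity of $n$ then settles $\varkappa(G^{\lfloor n/2\rfloor+1})$. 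For $r=\lfloor n/2\rfloor$ one again has $K_n$ on the cycle, but now a pendant $w_j$ misses the unique rim vertex at cycle-distance $n/2$ from $v_j$ (when $n$ is even) or the two rim vertices at cycle-distance $(n-1)/2$ from $v_j$ (when $n$ is odd), so one must weigh the all-cycle clique against cliques that sacrifice a few cycle vertices in order to admit pendants, parity by parity, which settles $\varkappa(G^{\lfloor n/2\rfloor})$. This near-diameter trade-off between cycle vertices and the ``almost-antipodal'' pendants they exclude is the delicate heart of the argument, and it is precisely here that I would recompute everything explicitly for small instances — $n=4,5,6,7$ — before trusting the closed-form values in the statement.
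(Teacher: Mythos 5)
Your reduction to computing $\omega(G^r)$, the distance formulas $d_G(w_i,v_j)=1+d_{C_n}(v_i,v_j)$ and $d_G(w_i,w_j)=2+d_{C_n}(v_i,v_j)$, and the treatment of the regimes $r\ge\lfloor n/2\rfloor+2$ and $1\le r<\lfloor n/2\rfloor$ are all correct; in the latter regime your upper bound $(r+1)+(r-1)=2r$ is actually sharper than the paper's argument, which only exhibits a $2r$-clique and asserts its maximality. The genuine gap is exactly where you suspected: the cases $r=\lfloor n/2\rfloor$ (still covered by the first line of the statement) and $r=\lfloor n/2\rfloor+1$ are deferred rather than proved, and they cannot be proved as stated, because carrying out your own framework refutes the claimed values. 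For $r=\lfloor n/2\rfloor+1$ every pendant is adjacent in $G^r$ to every cycle vertex and the pendants induce a copy of $C_n^{r-2}=C_n^{\lfloor n/2\rfloor-1}$, whose clique number is $\lfloor n/2\rfloor$ by Theorem \ref{NNCn^r}; hence $\omega(G^{\lfloor n/2\rfloor+1})=n+\lfloor n/2\rfloor$, which is $7$ for $n=5$ (the statement says $2(n-1)=8$) and $9$ for $n=6$ (the statement says $2n-1=11$). Concretely, for $n=6$, $r=4$, the pairs $\{w_1,w_4\},\{w_2,w_5\},\{w_3,w_6\}$ are each at distance $5>4$ in $G$, and any $11$ of the $12$ vertices contain both ends of one such pair, so no clique of size $2n-1$ exists. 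Similarly, for $n$ odd and $r=\lfloor n/2\rfloor$ the first line fails: $C_n^{\lfloor n/2\rfloor}=K_n$, so the cycle vertices alone form a clique of size $n=2r+1>2r$ (e.g.\ $n=5$, $r=2$ gives $\omega(G^2)=5$, not $4$).

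The source of the discrepancy is the paper's own proof, which for $r=\lfloor n/2\rfloor+1$ takes $V\cup[W\setminus\{w_r,w_{r+1}\}]$ as a clique on the grounds that the single pendant $w_n$ misses only two (resp.\ one) other pendants; but the remaining pendants are not pairwise adjacent in $G^r$, so that set is not a clique at all --- maximality of a clique containing $w_n$ has been conflated with a maximum clique. So the honest conclusion of your programme is not a proof of the theorem but a corrected version of it. Your stated intention to recompute $n=4,5,6,7$ before trusting the closed forms was exactly the right instinct; had you done so, you would have found counterexamples rather than confirmation, and your write-up should say so explicitly instead of leaving the two hard cases as a sketch.
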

\begin{proof}
The proof of this proposition is similar to the proof of Theorem \ref{T-NNSunG}. Let $V=\{v_1,v_2,v_3,\ldots, v_n\}$ be the vertex set of the cycle $C_n$ and $W=\{w_1,w_2,w_3, \ldots w_n\}$ be the set of pendant vertices in $G$.

If $r< \lfloor \frac{n}{2} \rfloor +1$, the vertex set $V'=\{w_{n-r+2},w_{n-r+3},w_{n-r+4}\ldots,w_n, v_{n-r+1},v_{n-r+2}, \\ v_{n-r+3},\ldots,v_n,v_1\}$ induces a clique in $G$ on $2r$ vertices. Since there is no other vertex outside $V'$ which is at a distance at most $r$ from all vertices in $V'$, the induced graph $\langle V' \rangle$ is a maximal clique in $G^r$. Therefore, for $r\le 1+\lfloor \frac{n}{2} \rfloor$, $\varkappa(G)=2r$.

If $r= \lfloor \frac{n}{2} \rfloor +1$ and $n$ is odd, then the vertex $w_n$ is adjacent to every vertex of $V$ and to the vertices of $W$ except $w_r$ and $w_{r+1}$. Then, $V'=V\cup[W-\{w_r,w_{r+1}\}]$ induces a maximal clique of order $2n-2$ in $G$. Therefore, in this case, the nourishing number of $G^r$ is $2(n-1)$.

If $r= \lfloor \frac{n}{2} \rfloor +1$ and $n$ is even, then the vertex $w_n$ is adjacent to every vertex of $V$ and to the vertices of $W$ except $w_r$ in $G^r$. Then, $V'=V\cup[W-\{w_r\}]$ induces a maximal clique of order $2n-1$ in $G^r$. Therefore, in this case, the nourishing number of $G^r$ is $2n-1$.

Since the maximum distance between the vertices in $W$ is $2+\lfloor \frac{n}{2} \rfloor$, diameter of $G$ is $2+\lfloor \frac{n}{2} \rfloor$. Hence, by Theorem \ref{T-Gdiam}, $G^r$ is a complete graph for all $r\ge 2+\lfloor \frac{n}{2} \rfloor$ on $2n$ vertices. Therefore, the difference sets of the set-labels of all vertices of $G^r$ are pairwise disjoint. Hence, for $r\ge 2+\lfloor \frac{n}{2} \rfloor$ the nourishing number of $G^r$ is $2n$.
\end{proof}


\section{Conclusion}

In this paper, we have established some results on the nourishing number of certain graphs and graph powers. The admissibility of strong IASI by various other graph classes, graph operations and graph products and finding the corresponding nourishing numbers are open. Determinning the nourishing numbers of powers of bipartite graphs, chordal graphs, crown graphs, prism graphs etc. are some of the open problems. 

Besides the above said problems, there are several other graph classes and structures for which the admissibility of strong IASIs are yet to be verified and the corresponding nourishing numbers are to be determined. 

More over, the properties and characteristics of strong IASIs, both uniform and non-uniform, are yet to be investigated. The problems of establishing the necessary and sufficient conditions for various graphs and graph classes to have certain IASIs are also open.






\begin{thebibliography}{98}
\bibitem{A1} B D Acharya,(1983). {\bf Set-Valuations and Their Applications}, MRI Lecture notes in Applied Mathematics, The Mehta Research Institute of Mathematics and Mathematical Physics, New Delhi.
\bibitem{BM1} J A Bondy and U S R Murty, (2008). {\bf Graph Theory}, Springer.
\bibitem{BLS} A Brandst\"{a}dt, V B Le and J P Spinard, (1999). {\bf Graph Classes:A Survey}, SIAM, Philadelphia.
\bibitem{BHLL} A Brand\"{a}dt, P L Hammer, V B Le and V L Lozin, (2005). {\em Bisplit graphs}, Disc. Math., {\bf 299}(1-3), 11-32.
\bibitem{CZ} G Chartrand and P Zhang, (2005). {\bf Introduction to Graph Theory}, McGraw-Hill Inc.
\bibitem{ND} N Deo, (1974). {\em Graph Theory with Applications to Engineering and Computer Science}, PHI Learning.
\bibitem{JAG} J A Gallian, (2011). {\em A Dynamic Survey of Graph Labelling}, The Electronic Journal of Combinatorics (DS 16).
\bibitem{GA} K A Germina and T M K Anandavally, (2012). {\em Integer Additive Set-Indexers of a Graph:Sum Square Graphs}, Journal of Combinatorics, Information and System Sciences, {\bf 37}(2-4), 345-358.
\bibitem{GS1} K A Germina and N K Sudev, (2013). {\em On Weakly Uniform Integer Additive Set-Indexers of Graphs}, Int. Math. Forum, {\bf 8}(37), 1827-34.
\bibitem{GS2} K A Germina and N K Sudev, {\em Some New Results on Strong Integer Additive Set-Indexers of Graphs}, communicated.
\bibitem{HT} G Hahn and C Tardif, (1997). {\em Graph Homomorphism: Structure and Symmetries} in {\bf Graph Symmetry : Algebraic Methods and Applications} (Eds: G Hahn and G Sabidussi), Kluwer Acad. Pub., 107-166.
\bibitem{FH}  F Harary, (1994). {\bf Graph Theory}, Addison-Wesley Publishing Company Inc.
\bibitem{GS0} N K Sudev and K A Germina, (2014). {\em On Integer Additive Set-Indexers of Graphs}, Int J. Math. Sci. \& Engg. Applications. {\bf 8}(II), 11-22.
\bibitem{GS9} N K Sudev and K A Germina, {\em A Characterisation of Strong Integer Additive Set-Indexers of Graphs}, communicated.
\bibitem{EWW} E W Weisstein (2011). {\bf CRC Concise Encyclopedia of Mathematics}, CRC press.
\bibitem{DBW} D B West, (2001). {\bf Introduction to Graph Theory}, Pearson Education Inc.
\bibitem{GCO} Information System on Graph Classes and their Inclusions, {\em http://www.graphclasses.org/smallgraphs}.

\end{thebibliography}
\end{document}